\documentclass[12pt]{amsart}

\usepackage{geometry}
\usepackage{amssymb,hyperref}
\usepackage{graphicx,tikz} 

\newtheorem{theorem}{Theorem}[section]

\newtheorem{proposition}[theorem]{Proposition}
\newtheorem{corollary}[theorem]{Corollary}

\theoremstyle{definition}
\newtheorem{definition}[theorem]{Definition}

\newtheorem{example}[theorem]{Example}
\newtheorem{question}[theorem]{Question}
\newtheorem{remark}[theorem]{Remark}

\newcommand{\R}{\mathbb{R}}
\newcommand{\Z}{\mathbb{Z}}
\newcommand{\nvol}{\mathrm{Vol}}
\newcommand{\conv}{\mathrm{conv}}

\newcommand{\des}{\mathrm{des}}
\newcommand{\eul}{\mathrm{Eul}}
\newcommand{\hdis}{\mathrm{hdis}}
\newcommand{\Var}{\operatorname{Var}}
\newcommand{\E}{\mathbb{E}}
\newcommand{\Prob}{\mathbb{P}}

\usepackage[colorinlistoftodos]{todonotes}

\definecolor{munsell}{rgb}{0.0, 0.5, 0.69}

\title{Ehrhart $h^*$-distributions}

\author[Braun]{Benjamin Braun}
\address{Department of Mathematics\\
         University of Kentucky
}
\email{benjamin.braun@uky.edu}
\urladdr{https://sites.google.com/view/braunmath/}

\author[Hlavacek]{Max Hlavacek}
\address{Department of Mathematics and Statistics\\
Pomona College
}
\email{mhap2023@pomona.edu}
\urladdr{https://maxhlav.github.io/}

\author[Meza]{Cesar J. Meza}
\address{Department of Mathematics\\
Washington University in St. Louis
}
\email{c.j.meza@wustl.edu}
\urladdr{https://cesarjmeza.github.io/}

\author[Morales]{Santiago Morales}
\address{Department of Mathematics \\
University of California, Davis
}
\email{moralesduarte@ucdavis.edu}
\urladdr{https://smoralesduarte.github.io}

\author[Vindas-Mel\'{e}ndez]{Andr\'{e}s R. Vindas-Mel\'{e}ndez}
\address{Department of Mathematics\\
Harvey Mudd College
}
\email{avindasmelendez@g.hmc.edu}
\urladdr{https://math.hmc.edu/arvm}

\date{16 July 2026}

\begin{document}

\begin{abstract}
    Every polynomial with real non-negative coefficients yields a finite probability distribution after normalization.
    The Ehrhart $h^*$-polynomial of a lattice polytope $P$ is a non-negative integer polynomial that encodes the integer-point counts for positive integer dilations of $P$.
    We study the corresponding finite distributions, which we call $h^*$-distributions.
    We determine the mean and variance of these distributions, establish a connection between higher moments and Ehrhart polynomial coefficients, and study their cluster points in the $d$-dimensional probability simplex.
    We consider the special case of real-rooted $h^*$-distributions, applying existing tail bounds to obtain new linear inequalities for the coefficients of real-rooted $h^*$-polynomials arising from reflexive polytopes.
    We conclude by establishing sufficient conditions under which a sequence of real-rooted $h^*$-distributions is asymptotically normal, and we apply our results to various families of polytopes, including zonotopes and Pitman-Stanley polytopes.
\end{abstract}

\thanks{BB was partially supported by NSF award DMS-2450299.
ARVM was partially supported by NSF award DMS-2532321.
The authors thank Daniel Hwang, Martina Juhnke, Selvi Kara, Brittney Marsters, Akiyoshi Tsuchiya, and Zoe Wellner for thoughtful conversations at the beginning of this project.
The authors thank the Institute for Pure and Applied Mathematics for hosting the workshop ``Computational Interactions between Algebra, Combinatorics, and Discrete Geometry'', where this project was initiated.}

\maketitle

\section{Introduction}\label{sec:introduction}

For a $d$-dimensional lattice polytope $P$, i.e., a convex polytope with vertices in $\Z^n$ and affine span of dimension $d$, Ehrhart theory is the study of the sequence $|tP\cap \Z^n|$ for $t\geq 0$.
A fundamental result of Ehrhart theory is that this entire sequence is determined by a polynomial of degree at most $d$ called the $h^*$-polynomial of $P$, and that the (normalized) volume of $P$ is given by the sum of the $h^*$-polynomial coefficients.
The study of Ehrhart $h^*$-polynomials is a major focus of geometric and algebraic combinatorics, with connections to commutative algebra, algebraic geometry, and number theory~\cite{BeckRobins,BrunsGubeladze,FHsurvey,Hibibook}.

There is a long history in Ehrhart theory of the study of $h^*$-polynomial properties motivated by probability theory, e.g., unimodality, log-concavity, and real-rootedness~\cite{brandensurvey,braunsurvey,FHsurvey}.
For any polynomial $f(t)$ with real non-negative coefficients, dividing the coefficients by $f(1)$ yields a finite probability distribution.
While there has been significant effort put into the study of $h^*$-polynomials, a systematic study of probability distributions arising from normalized $h^*$-polynomials has not to the knowledge of the authors been done.
However, there are several reasons to consider this distributional view.

First, structure that is masked by differences in volume between polytopes becomes more clear.
For example, consider the line plots of $h^*$-vectors, e.g., coefficient vectors for $h^*$-polynomials, for all flow polytopes of full acyclic directed graphs with $9$ vertices~\cite{ampleframingflow}, which are plotted in Figure~\ref{fig:9vertexfulldag}.
Because of the wide variation in the normalized volume of these polytopes, it is not clear how to compare these $h^*$-vectors, and the plot is not particularly informative.
However, when the corresponding distributions for these vectors are plotted in Figure~\ref{fig:9vertexfulldagdist}, it is immediately clear that these have similar structure.

\begin{figure}
  \centering
\begin{minipage}{0.49\textwidth}
\centering
  \includegraphics[width=\textwidth]{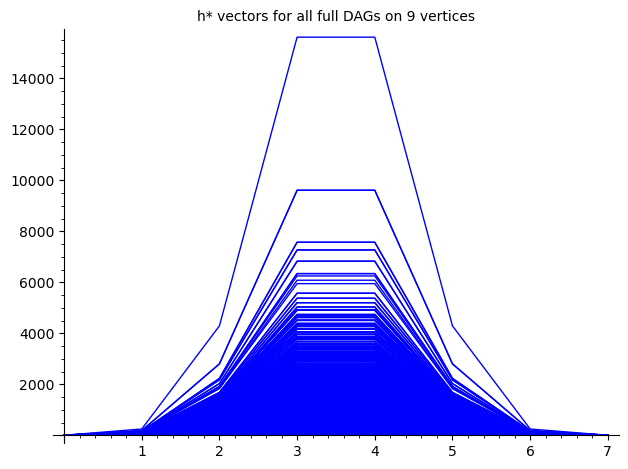}
  \caption{Line plots of all \textbf{$h^*$-vectors} for flow polytopes of full DAGs with 9 vertices.
  The horizontal axis is the index of $h^*$ while the vertical axis is the value of $h^*_j$.}
  \label{fig:9vertexfulldag}
  \end{minipage}
  \hfill\begin{minipage}{0.49\textwidth}
    \centering
    \includegraphics[width=\textwidth]{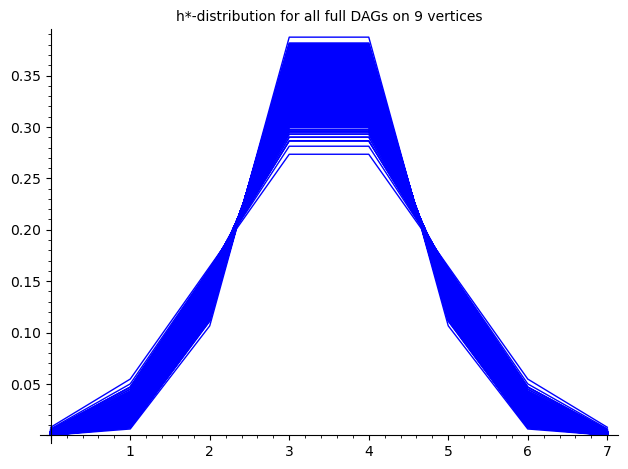}
    
    \caption{Line plots of all \textbf{$h^*$-distributions} for flow polytopes of full DAGs with 9 vertices.
    The horizontal axis is the index of $h^*/\sum_{i=0}^dh^*_i$ while the vertical axis is the value of $h^*_j/\sum_{i=0}^dh^*_i$.}
    
    \label{fig:9vertexfulldagdist}
\end{minipage}
\end{figure}

Second, taking an explicit distributional view of $h^*$-polynomials allows for the application of tools from probability theory to their study.
In his 1997 survey article regarding real-rooted distributions arising from enumeration problems, Pitman~\cite{pitman} emphasized that there is a long history in statistics of studying the distribution of the number of successes in independent trials, which is equivalent to studying real-rooted distributions, and that various results in the statistics literature were unfamiliar to combinatorialists.
In our experience, these results remain unfamiliar to many researchers in Ehrhart theory, and we believe it would be beneficial to bring increased awareness to these and similar connections with probability and statistics.

With this as motivation, our contributions in this paper are the following.

\begin{itemize}
    \item We introduce $h^*$-distributions in Subsection~\ref{ssec:distributions} and determine formulas for their mean and variance in Subsection~\ref{ssec:meanvariance}.
    \item The normalization process allows $h^*$-distributions to be directly compared, leading to the study of cluster points.
    We study cluster points in Subsection~\ref{ssec:cluster} and apply our results to a combinatorial problem involving permutation statistics in Subsection~\ref{ssec:combinatorialapplication}.
    \item We apply tail bounds for real-rooted distributions to obtain inequalities for $h^*$-vectors in Subsection~\ref{ssec:tailboundapplications}, and we establish sufficient conditions for a sequence of real-rooted $h^*$-distributions to be asymptotically normal in Subsection~\ref{ssec:asymptoticnormality}.
\end{itemize}

We conclude this work with several further directions for research in Section~\ref{sec:furtherdirections}.

\section{Background and notation}\label{sec:background}

\subsection{Ehrhart theory}\label{subsec:background}
Let $P$ denote a $d$-dimensional lattice polytope in $\R^n$, i.e., the convex hull of a finite set of vectors in $\Z^n$ having affine span of dimension $d$.
Letting $tP$ denote the dilation of $P$ by $t\in \R$, the \emph{Ehrhart series} of $P$ is
\[
1+\sum_{t\in \Z_{\geq 1}}|tP\cap \Z^n|z^t = \frac{\sum_{j=0}^dh_j^*z^j}{(1-z)^{d+1}},
\]
where rationality of the power series is due to Ehrhart~\cite{Ehrhart}.
A result due to Stanley~\cite{StanleyDecompositions} is that each $h_j^*\in \Z_{\geq 0}$, which is referred to as \emph{$h^*$-non-negativity}.
We call the polynomial $h^*(P;z):=\sum_{j=0}^dh_j^*z^j$ the \emph{$h^*$-polynomial} of $P$ and the vector $(h_j^*:j=0,\ldots,d)$ the \emph{$h^*$-vector} of $P$.
A basic result of Ehrhart theory is that 
\[
\sum_{j=0}^dh_j^*=\nvol(P):=d!\mathrm{vol}(P),
\]
where $\mathrm{vol}(P)$ is the Euclidean volume of $P$ with respect to an integer basis of the integer lattice contained in the affine span of $P$.
We call $\nvol(P)$ the \emph{normalized volume} of $P$.
Because the Ehrhart series is a rational function with denominator $(1-z)^{d+1}$ and numerator polynomial of degree at most $d$, the function
\[
L_P(t):=|tP\cap \Z^n|
\]
is given by a polynomial of degree $d$, which
we call the \emph{Ehrhart polynomial} of $P$ and represent as
\[
  L_P(t)=|tP\cap\Z^n|=c_dt^d+c_{d-1}t^{d-1}+c_{d-2}t^{d-2}+\cdots+c_0 \, .
\]
A standard fact in Ehrhart theory is that
\[
\nvol(P)=d!c_d\, .
\]

The coefficient $c_{d-1}$ admits a geometric interpretation involving the normalized volumes of facets of $P$.
Given a lattice polytope $P$, the \emph{normalized boundary volume} of $P$ is defined to be
\[
B(P):=\sum_{F\text{ facets of }P}\nvol(F)\, .
\]
It is a well-known~\cite[Theorem~5.6]{BeckRobins} result in Ehrhart theory that for a $d$-dimensional lattice polytope $P$, we have 
\[
(d-1)!c_{d-1}=\frac{1}{2}B(P)\, .
\]

A $d$-dimensional lattice polytope $P$ is called \emph{reflexive} if $h^*_i=h^*_{d-i}$ for all $i=0,\ldots,d$.
Thus, reflexive polytopes are those with full-degree $h^*$-polynomials having palindromic coefficients.
Reflexive polytopes are known to have every facet at lattice distance $1$ from a unique interior point, and as a result, reflexive polytopes satisfy
\[
\nvol(P)=B(P) \, .
\]

\subsection{Finite probability distributions}

All background in this subsection is found in the survey by Pitman~\cite{pitman}; further details can be found in Pitman's article and the references therein.
Let $[0,d]:=\{0,1,\ldots,d\}$ and let $\Prob:[0,d]\to \R_{\geq 0}$ be a finite probability distribution.
For a finite set $\Omega$, we denote by $X:\Omega\to [0,d]$ a random variable and we write $X\sim\Prob$ if $X$ has distribution $\Prob$.
We denote by $\E[X]=\sum_{i=0}^di\, \Prob(i)$ and $\Var[X]=\E[X^2]-\E[X]^2$ the expected value and variance of $X$, respectively.

The distribution of the number of successes in $d$ independent Bernoulli trials having probabilities $0\leq p_i\leq 1$ for $i=1,\ldots,d$, respectively, is given by the coefficient vector of the polynomial
\[
\prod_{i=1}^d((1-p_i)+p_iz)^d \, .
\]
Thus, the roots of this polynomial are given by $-(1-p_i)/p_i$ for $i=1,\ldots,d$.

Many important finite probability distributions arise in combinatorics from a random variable $X$ defined by a statistic on a finite set of objects $\Omega$.
In many cases, setting $a_k:=|\{j\in \Omega:X(j)=k\}|$ and $A(z):=\sum_{k\geq 0}a_kz^k$ yields a real-rooted polynomial, in which case we say the distribution given by the normalized sequence
\[
\frac{1}{A(1)}(a_0,\ldots,a_d)
\]
is a \emph{real-rooted} distribution.
It is straightforward to verify that a finite distribution is real-rooted if and only if it arises from a random variable given by a sum of independent Bernoulli trials.
It is known that for a random variable $X$ with a real-rooted distribution arising from a sum of independent trials with probabilities $p_1,\ldots,p_d$, we have

\[
\E[X]=\sum_ip_i
\]
and
\[
\Var[X]=\sum_{i=0}^dp_i(1-p_i) \, .
\]

Define $\Prob[a,b]:=\sum_{i=a}^b\Prob(i)$.
Let $P_{d,p}$ be the binomial distribution, i.e., the distribution arising from $d$ Bernoulli trials where every $p_i$ is equal to $p$.
Given a real-rooted distribution $\Prob$ on $[0,d]$ with $X\sim \Prob$, set $p=\E[X]/d$.
Hoeffding's inequalities state that for all integers $n$ and $m$ with $0\leq m\leq \E[X]-1$ and $\E[X]+1\leq n\leq d$, we have
\begin{equation}\label{eq:hoeffding}
\Prob[0,m]\leq P_{d,p}[0,m]
\text{ and }
\Prob[n,d]\leq P_{d,p}[n,d] \, .
\end{equation}
As Pitman~\cite[Section~2]{pitman} observes, this implies that among all real-rooted distributions on $[0,d]$ with a fixed mean, the binomial distribution is the ``most spread out''.
Combining these inequalities with binomial tail probability bounds yields the following large deviation bounds for $\E[X]+1\leq n\leq d$:
\begin{equation}
    \label{eq:largedeviation}
    \Prob[n,d]\leq \left(\frac{\E[X]}{n}\right)^n\left(\frac{d-\E[X]}{d-n}\right)^{d-n}.
\end{equation}
The same function is an upper bound on $\Prob[0,m]$ for $0\leq m\leq \E[X]$ with $n$ replaced by $m$.

Let 
\[
\phi(x) = \frac{1}{\sqrt{2\pi}}e^{-x^2/2}
\]
be the normal density function, and define
\[
\Phi(z)=\int_{-\infty}^z\phi(x)dx \, .
\]
For any sequence of distributions $Q_d\sim X_d$ on $\{0,\ldots,d\}$ for $d=1,2,\ldots$ with means $\E[X_d]$ and variances $\Var[X_d]$, we say the sequence of distributions $Q_d$ is \emph{asymptotically normal} if 
\[
\max_{0\leq k\leq d}\left| Q_d[0,k] - \Phi\left(\frac{k-\E[X_d]}{\sqrt{\Var[X_d]}}\right) \right| \to 0 \text{ as }d \to \infty \, .
\]
If $Q_d\sim X_d$ is a sequence of real-rooted distributions, then the sequence is asymptotically normal if and only if 
\begin{equation}
    \label{eq:rrasymp}
\sqrt{\Var[X_d]}\to \infty \text{ as }d\to \infty \, .
\end{equation}

\section{$h^*$-distributions and their properties}\label{sec:distributions}

\subsection{$h^*$-distributions}\label{ssec:distributions}
We now define the main object of study in this work.

\begin{definition}
    Given a lattice polytope $P$, the \emph{$h^*$-distribution} of $P$ is the finite probability distribution given by 
    \[
    \hdis(P):=
    \frac{1}{\nvol(P)} (h_0^*,h_1^*,\ldots,h_d^*) \, ,
    \]
    where the probability of outcome $j$ is given by $h_j^*/\nvol(P)$. 
    More generally, given a polynomial $f(x)$ with non-negative real coefficients, the \emph{distribution} associated to $f$ is the discrete probability distribution given by the coefficient vector of
    \(\frac{f(x)}{f(1)}\).
\end{definition}

\begin{example}\label{ex:1d}
    The Ehrhart series of a lattice line segment in $\R^{1}$ of length $\ell \geq 1$ is 
    \begin{equation*}
        \sum_{t \geq 0} (\ell{t} + 1)z^{t} = \frac{1 + (\ell-1)z}{(1-z)^{d+1}}.
    \end{equation*}
    It follows that the $h^{*}$-distribution of a lattice line segment in $\R^{1}$ is $(\frac{1}{\ell}, \frac{\ell-1}{\ell})$, where $\ell \geq 1$.
\end{example}

\begin{example}\label{ex:2d}
    In 1976, Scott \cite{Scott} classified the $h^*$-vectors $(h^*_0, h^*_1, h^*_2)$ of two-dimensional polytopes, showing that they must satisfy one of the following conditions:
    \begin{enumerate}
        \item $(1, h^*_1, 0)$ with $h^*_1 \geq 0$
        \item $1 \leq h^*_2 \leq h^*_1 \leq 3h^*_2 + 3$
        \item $(h_0^*,h_1^*,h_2^*)=(1,7,1)$
    \end{enumerate}
    Note that only the triangle $\conv(0, 3e_{1}, 3e_{2})$, where $e_{1}$, $e_{2}$ are basis vectors of the ambient lattice and $0$ is the origin, has $h^*$-vector $(1,7,1)$~\cite{Scott, Balletti-Higashitani}.
    For an $h^*$-vector with normalized volume $V$, Figure~\ref{fig:2d} is a plot of $(h_1^*/V, h_2^*/V)$ for all $h^*$ distributions of two-dimensional lattice polygons with $h^*_2 < 200$.
    Because $h_0^*=1/V$ is small for large $V$, it makes sense for visualization purposes to project away the $0$-th coordinate.
    The Eulerian point $(1/2,1/2)$ is a red square, and has the highest $h_2^*/V$ coefficient.
    The point $(1/3,1/3)$, corresponding to the barycenter of the probability simplex, is the orange ``x'' on the left.
    The distribution for the vector $(1,7,1)$ is a black ``Y''  isolated in the lower right.
    Note that on the x-axis are a sequence of points clustering toward $(1,0)$, which are the $h^*$ distributions of $1$-dimensional lattice segments, obtained in dimension two through the lattice pyramid operation. 
\end{example}

\begin{figure}
    \centering
    \includegraphics[width=0.8\linewidth]{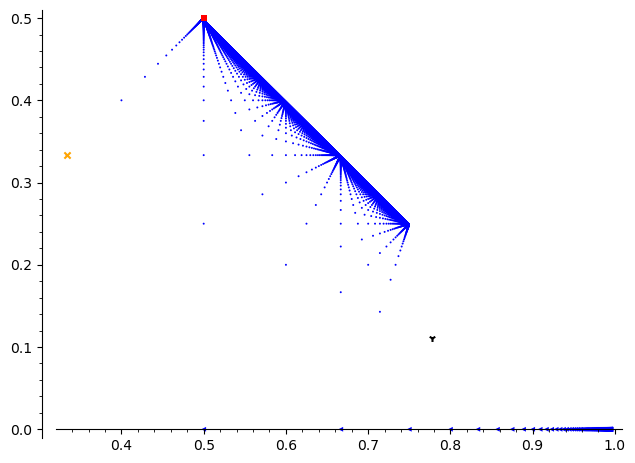}
    \caption{A plot of $(h_1^*/V, h_2^*/V)$ for all $h^*$ distributions of two-dimensional lattice polygons with $h^*_2 < 200$, as described in Example~\ref{ex:2d}.
    The horizontal axis corresponds to the value $h^*_1/V$ while the vertical axis corresponds to the value $h^*_2/V$.}
    \label{fig:2d}
\end{figure}

In general, $h^*$-distributions do not arise from combinatorial random variables in a canonical way.
However, for lattice simplices, the $h^*$-distribution does arise from a statistic on a set of objects.
Given a lattice simplex $P$ with vertices $\mathbf{v}_0,\ldots,\mathbf{v}_d$, the \emph{half-open fundamental parallelepiped} for $P$ is
\[
\Pi_P:=\left\{\sum_{i=0}^d\lambda_i(1,\mathbf{v}_i):0\leq \lambda_i<1\right\} \, .
\]

\begin{proposition}
    \label{prop:simplicesdistribution}
    Given a lattice simplex $P$, let $X_P:\Pi_P\cap \Z^{d+1}\to \Z$ be the random variable defined by 
    \[
    X_P((y_0,y_1,\ldots,y_d))=y_0 \, ,
    \]
    i.e., $X_P$ is the height of $(y_0,y_1,\ldots,y_d)$ in the cone over $P$.
    Then $X_P\sim \hdis(P)$.
\end{proposition}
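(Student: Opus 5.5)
The approach is the standard cone-tiling argument for the Ehrhart series of a simplex. The goal is to show that for each $j$, the number of lattice points of $\Pi_P$ at height $j$ equals $h_j^*$, and that $|\Pi_P\cap\Z^{d+1}|=\nvol(P)$. Then the uniform distribution on $\Pi_P\cap\Z^{d+1}$ pushed forward by $X_P$ is exactly $\hdis(P)$. Throughout, we may assume $P\subset\R^d$ is full-dimensional, after a lattice-preserving affine isomorphism onto its affine span. This changes neither the Ehrhart series nor the lattice-point structure of the cone.

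Step 1: consider the cone $C=\mathrm{cone}\{(1,\mathbf{v}_0),\ldots,(1,\mathbf{v}_d)\}\subset\R^{d+1}$. The generators $w_i=(1,\mathbf{v}_i)$ are linearly independent, because $P$ is a $d$-simplex. Hence every point of $C$ has unique coordinates $\lambda_i\ge 0$ in the basis $w_i$. Splitting each $\lambda_i$ into its integer and fractional parts shows that every lattice point $m\in C\cap\Z^{d+1}$ is uniquely written as $m=p+\sum_i k_iw_i$ with $p\in\Pi_P\cap\Z^{d+1}$ and $k_i\in\Z_{\ge0}$. The point $p$ is a lattice point because $m$ and the $w_i$ are.

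Step 2: the lattice points of $C$ at height $t$ are exactly $\{t\}\times(tP\cap\Z^d)$. Now sum $z^{\text{height}}$ over $C\cap\Z^{d+1}$ using the decomposition from Step 1. Each $w_i$ has height $1$, so this gives
\[
1+\sum_{t\ge1}L_P(t)z^t=\frac{\sum_{p\in\Pi_P\cap\Z^{d+1}}z^{p_0}}{(1-z)^{d+1}}.
\]
The height $p_0=\sum\lambda_i$ of a point of $\Pi_P$ is strictly less than $d+1$, so it is at most $d$. The numerator is therefore a polynomial of degree at most $d$. Comparing with the defining identity for the Ehrhart series, we get $h^*(P;z)=\sum_{p}z^{p_0}$. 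In other words, $h_j^*=|\{p\in\Pi_P\cap\Z^{d+1}:p_0=j\}|$.

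Step 3: evaluate at $z=1$, which gives $|\Pi_P\cap\Z^{d+1}|=\sum_j h_j^*=\nvol(P)$. Then $\Prob(X_P=j)=h_j^*/\nvol(P)$, which is the claim. The one step requiring care is the unique-decomposition argument in Step 1, namely that the fractional part of a lattice point of $C$ is itself a lattice point lying in $\Pi_P$. This is immediate from integrality of the $w_i$, so I expect no real obstacle. The rest is bookkeeping of generating functions.
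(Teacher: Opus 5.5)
Your proof is correct and takes the same route as the paper. The paper simply cites the standard fact from Beck--Robins (Chapter~3) that $h_j^*$ counts the lattice points of $\Pi_P$ at height $j$. Your Steps 1--2 are the cone-tiling argument that proves this fact, and Step~3 supplies the normalization $|\Pi_P\cap\Z^{d+1}|=\nvol(P)$ and the uniform measure on $\Pi_P\cap\Z^{d+1}$, both of which the paper leaves implicit.
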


\begin{proof}
    It is known~\cite[Chapter~3]{BeckRobins} that for a lattice simplex $h_j^*$ is equal to the number of lattice points in the fundamental parallelepiped with first coordinate $j$, i.e., height $j$.
    The result follows.
\end{proof}

Thus, for a lattice simplex, the $h^*$-distribution encodes the probability of selecting a random fundamental parallepiped point at a given height.

\subsection{Mean and Variance}\label{ssec:meanvariance}

We can express the mean and variance for an $h^*$-distribution using the coefficients of $L_P(t)$ and the normalized boundary volume.

\begin{theorem}\label{thm:meanvariance}
Let \(P\) be a \(d\)-dimensional lattice polytope and consider a random variable \(X_P\sim\hdis(P)\).
Then
\begin{equation}\label{eq:mean}
  \E[X_P]
  =
  \frac{d+1}{2}
  -
  \frac{(d-1)!c_{d-1}}{\nvol(P)}=\frac{d+1}{2}-\frac{B(P)}{2\nvol(P)} \, .
\end{equation}
If \(d\ge2\), then
\[
  \E[X_P^2]
  =
  \frac{2(d-2)!c_{d-2}}{\nvol(P)}
  +(d+1)\E[X_P]
  -\frac{(d+1)(3d+2)}{12},
\]
and therefore
\begin{equation}\label{eq:variance}
  \Var[X_P]
  =
  \frac{2(d-2)!c_{d-2}}{\nvol(P)}
  +(d+1)\E[X_P]
  -\frac{(d+1)(3d+2)}{12}
  -\E[X_P]^2.
\end{equation}
\end{theorem}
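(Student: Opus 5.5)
The plan is to work directly from the standard expansion of the Ehrhart polynomial in the binomial basis. The Ehrhart series identity $\sum_{t\ge0}L_P(t)z^t=h^*(P;z)/(1-z)^{d+1}$ together with $1/(1-z)^{d+1}=\sum_t\binom{t+d}{d}z^t$ gives
\[
L_P(t)=\sum_{j=0}^d h_j^*\binom{t+d-j}{d}=\frac{1}{d!}\sum_{j=0}^d h_j^*\prod_{i=1}^{d}(t+i-j)\, .
\]
This identity holds for all integers $t\ge0$, since binomial coefficients with $0\le t+d-j<d$ vanish exactly as the product does. Hence it is a polynomial identity. I will then read off $c_{d-1}$ and $c_{d-2}$ as linear combinations of the $h_j^*$ whose weights are polynomials in $j$ of degree $1$ and $2$. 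Dividing by $\nvol(P)=\sum_j h_j^*$ turns these weighted sums into $\E[X_P]$ and $\E[X_P^2]$.

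\textbf{The mean.} Set $a_i:=i-j$ for $i=1,\ldots,d$. The coefficient of $t^{d-1}$ in $\prod_i(t+a_i)$ is $e_1(a)=\sum_i a_i=d\bigl(\tfrac{d+1}{2}-j\bigr)$. Therefore
\[
(d-1)!\,c_{d-1}=\sum_j h_j^*\Bigl(\frac{d+1}{2}-j\Bigr)=\frac{d+1}{2}\nvol(P)-\sum_j j\,h_j^*\, .
\]
Dividing by $\nvol(P)$ gives \eqref{eq:mean}. The second form of the mean follows from $(d-1)!c_{d-1}=\tfrac12 B(P)$, quoted in Section~\ref{sec:background}.

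\textbf{The second moment.} For $d\ge2$, the coefficient of $t^{d-2}$ in the same product is
\[
e_2(a)=\tfrac12\Bigl[\bigl(\textstyle\sum_i a_i\bigr)^2-\sum_i a_i^2\Bigr],
\]
where
\[
\sum_i a_i^2=\frac{d(d+1)(2d+1)}{6}-j\,d(d+1)+d\,j^2 .
\]
Expanding, $e_2(a)$ is a quadratic in $j$:
\begin{itemize}
\item the $j^2$ coefficient is $\tfrac{d(d-1)}{2}$;
\item the $j$ coefficient should come out to $-\tfrac{d(d-1)(d+1)}{2}$;
\item the constant term should be $\tfrac{d(d-1)(d+1)(3d+2)}{24}$.
\end{itemize}
Dividing $d!\,c_{d-2}=\sum_j h_j^*e_2(a)$ by $d!/(d-2)!=d(d-1)$ gives
\[
(d-2)!\,c_{d-2}=\frac12\sum_j h_j^*\Bigl(j^2-(d+1)j+\frac{(d+1)(3d+2)}{12}\Bigr).
\]
Multiplying by $2/\nvol(P)$ and solving for $\sum_j j^2h_j^*/\nvol(P)$ yields the stated formula for $\E[X_P^2]$. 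The variance formula \eqref{eq:variance} is then just $\E[X_P^2]-\E[X_P]^2$.

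The only real obstacle is the bookkeeping in the $t^{d-2}$ coefficient, namely getting the constant term $(d+1)(3d+2)/12$ exactly right. I would double-check it by evaluating at $j=0$, where $e_2(1,\ldots,d)$ is the Stirling number $\left[{d+1\atop d-1}\right]=\tfrac{(d+1)d(d-1)(3d+2)}{24}$. I would also sanity-check the final formula on the standard simplex, where $h^*=1$, $\E[X_P]=0$, and $L_P(t)=\binom{t+d}{d}$.
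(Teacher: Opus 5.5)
Your proposal is correct and follows the paper's proof: expand $L_P(t)=\sum_j h_j^*\binom{t+d-j}{d}$ and extract the $t^{d-1}$ and $t^{d-2}$ coefficients as polynomials in $j$. Your $e_1,e_2$ computation reproduces exactly the paper's $\frac{(d+1)/2-j}{(d-1)!}$ and $\frac{12j^2-12(d+1)j+(d+1)(3d+2)}{24(d-2)!}$, and like the paper you use $2(d-1)!c_{d-1}=B(P)$ for the boundary form; your check that the binomial expansion holds as a polynomial identity is a small point the paper leaves implicit.
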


\begin{proof}
The Ehrhart polynomial has the binomial-basis expansion
\[
  L_P(t)=\sum_{j=0}^d h_j^*\binom{t+d-j}{d}.
\]
The coefficient of \(t^{d-1}\) in \(\binom{t+d-j}{d}\) is
\[
  \frac{(d+1)/2-j}{(d-1)!}.
\]
Equating coefficients of \(t^{d-1}\) on each side of the first displayed equation gives
\[
  c_{d-1}=\frac{\nvol(P)}{(d-1)!}\left(\frac{d+1}{2}-\E[X_P]\right),
\]
which is the mean formula~\eqref{eq:mean}.

For the variance, the coefficient of \(t^{d-2}\) in \(\binom{t+d-j}{d}\) is
\[
  \frac{12j^2-12(d+1)j+(d+1)(3d+2)}{24(d-2)!}.
\]
Comparing the coefficients of \(t^{d-2}\) yields
\[
  c_{d-2}=\frac{\nvol(P)}{24(d-2)!}
  \left(12\E[X_P^2]-12(d+1)\E[X_P]+(d+1)(3d+2)\right),
\]
from which the asserted formula~\eqref{eq:variance} follows. 
The boundary-volume identity follows from the well-known equality \(2(d-1)!c_{d-1}=B(P)\)~\cite{BeckRobins}.
\end{proof}

\begin{example}
    \label{ex:unimodularmean}
    Given a unimodular simplex $P$ of dimension $d$ and $X_P\sim \hdis(P)$, we have $\nvol(P)=1$ and $B(P)=d+1$.
    Thus, 
    \[
    \E[X_P]=\frac{d+1}{2}-\frac{d+1}{2\cdot 1} = 0 \, .
    \]
    As $h^*(P;z)=1$, this is correct.
\end{example}

\begin{example}
    \label{ex:reflexive}
    Given a $d$-dimensional reflexive polytope $P$ and $X_P\sim \hdis(P)$, we have that $B(P)/2\nvol(P)=1/2$.
    Thus, 
    \[
    \E[X_P]=\frac{d+1}{2}-\frac{1}{2}=\frac{d}{2}\, .
    \]
    This agrees with the fact that the $h^*$-vector of $P$ is symmetric with center of symmetry $\frac{d}{2}$.
\end{example}

Example~\ref{ex:reflexive} implies that the variance of $h^*$-distributions for reflexive polytopes depends only on the dimension and the coefficients $c_d$ and $c_{d-2}$ of $L_P(t)$, recorded precisely in the following corollary.

\begin{corollary}
    \label{cor:varreflexives}
    Let $P$ be a reflexive polytope of dimension $d$, and let $X_P\sim \hdis(P)$.
    Then 
    \[
    \E[X_P] = d/2
    \]
    and 
    \[
      \Var[X_P]
      =
      \frac{2}{d(d-1)}\frac{c_{d-2}}{c_d}
      +\frac{d-2}{12}.
    \]
\end{corollary}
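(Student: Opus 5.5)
The plan is to specialize Theorem~\ref{thm:meanvariance} using the reflexive identities. The mean statement $\E[X_P]=d/2$ is exactly Example~\ref{ex:reflexive}: reflexive polytopes satisfy $\nvol(P)=B(P)$, so the mean formula~\eqref{eq:mean} gives $\frac{d+1}{2}-\frac12=\frac d2$. For the variance we may assume $d\ge 2$, as required in Theorem~\ref{thm:meanvariance}; the formula involves $\frac{1}{d(d-1)}$ in any case.

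Next, substitute $\E[X_P]=d/2$ and $\nvol(P)=d!\,c_d$ into the variance formula~\eqref{eq:variance}. The first term becomes
\[
\frac{2(d-2)!\,c_{d-2}}{d!\,c_d}=\frac{2}{d(d-1)}\frac{c_{d-2}}{c_d},
\]
which is the first term in the claimed expression. It remains to simplify the constant part
\[
(d+1)\frac d2-\frac{(d+1)(3d+2)}{12}-\frac{d^2}{4}.
\]
Over the common denominator $12$, the numerator is $6d^2+6d-(3d^2+5d+2)-3d^2=d-2$. Hence the constant equals $\frac{d-2}{12}$, which finishes the proof.

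There is no real obstacle here. The only care needed is the arithmetic in this last simplification and the use of $\nvol(P)=d!\,c_d$ to convert the normalized volume into $c_d$.
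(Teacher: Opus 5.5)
Your proposal is correct and is exactly the paper's argument: substitute $\E[X_P]=d/2$ (from Example~\ref{ex:reflexive}) and $\nvol(P)=d!\,c_d$ into Theorem~\ref{thm:meanvariance} and simplify. Your constant-term arithmetic, $6d^2+6d-(3d^2+5d+2)-3d^2=d-2$, is right, and you correctly note the $d\ge 2$ hypothesis for the variance.
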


\begin{proof}
    Apply $\nvol(P)=d!c_d$ and $\E[X_P]=d/2$ to Theorem~\ref{thm:meanvariance}, then simplify the resulting expression.
\end{proof}

Using Theorem~\ref{thm:meanvariance}, we can also obtain expressions for the mean and variance of integer dilates of $P$.

\begin{corollary}\label{cor:momentsdilates}
Let \(d\ge2\), and define
\[
  \alpha(P)=\frac{(d-1)!c_{d-1}}{\nvol(P)},
  \qquad
  \beta(P)=\frac{2(d-2)!c_{d-2}}{\nvol(P)}.
\]
Then for positive integer dilates $tP$ of a $d$-dimensional lattice polytope $P$, we have
\[
  \E[X_{tP}]=\frac{d+1}{2}-\frac{\alpha(P)}{t}
\]
and
\[
  \Var[X_{tP}]=\frac{d+1}{12}+\frac{\beta(P)-\alpha(P)^2}{t^2} \, .
\]
\end{corollary}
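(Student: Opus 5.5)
We apply Theorem~\ref{thm:meanvariance} directly to the dilate $tP$, which is again a $d$-dimensional lattice polytope. The only input needed is how the relevant quantities scale under dilation. Since $L_{tP}(s)=L_P(ts)$, the Ehrhart coefficients of $tP$ are $c_k(tP)=t^kc_k(P)$. In particular $\nvol(tP)=d!\,c_d t^d=t^d\nvol(P)$.

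For the mean, we use these scalings to compute
\[
\frac{(d-1)!c_{d-1}(tP)}{\nvol(tP)}=\frac{t^{d-1}(d-1)!c_{d-1}}{t^d\nvol(P)}=\frac{\alpha(P)}{t}.
\]
Substituting into \eqref{eq:mean} gives $\E[X_{tP}]=\frac{d+1}{2}-\frac{\alpha(P)}{t}$ immediately. In the same way, the analogous quotient with $c_{d-2}$ scales to $\beta(tP)=\beta(P)/t^2$.

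For the variance, we plug $\beta(tP)=\beta/t^2$ and $\E[X_{tP}]=m-\alpha/t$, where $m=\frac{d+1}{2}$, into \eqref{eq:variance}. This gives
\[
\Var[X_{tP}]=\frac{\beta}{t^2}+(d+1)\Bigl(m-\frac{\alpha}{t}\Bigr)-\frac{(d+1)(3d+2)}{12}-\Bigl(m-\frac{\alpha}{t}\Bigr)^2.
\]
We then collect terms by powers of $1/t$, using $d+1=2m$:
\begin{itemize}
\item The $1/t$ terms are $-(d+1)\alpha/t+2m\alpha/t=0$, so they cancel.
\item The $1/t^2$ terms are $(\beta-\alpha^2)/t^2$.
\item The constant term is $(d+1)m-m^2-\frac{(d+1)(3d+2)}{12}$. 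This equals $\frac{(d+1)^2}{4}-\frac{(d+1)(3d+2)}{12}=\frac{(d+1)(3d+3-3d-2)}{12}=\frac{d+1}{12}$.
\end{itemize}
This yields the claimed formula. The only point requiring care is the cancellation of the $1/t$ term, which comes from $d+1=2m$.
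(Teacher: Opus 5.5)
Your proposal is correct and follows the same route as the paper: substitute the scalings $c_k(tP)=t^kc_k(P)$ and $\nvol(tP)=t^d\nvol(P)$ into Theorem~\ref{thm:meanvariance}. You have simply written out the algebra that the paper leaves to the reader, including the cancellation of the $1/t$ terms via $d+1=2m$ and the simplification of the constant term to $\frac{d+1}{12}$.
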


\begin{proof}
The Ehrhart coefficients of \(tP\) are \(t^ic_i\), while \(\nvol(tP)=t^d\nvol(P)\).  
The result follows from substituting these into Theorem~\ref{thm:meanvariance}.
\end{proof}

Recall that a \emph{lattice zonotope} is a Minkowski sum $Z=\sum_{i=1}^m [0,\mathbf{v}_i]$ of lattice segments, where $\mathbf{v}_1,\dots, \mathbf{v}_m \in \Z^d$ and $[0,\mathbf{v}_i]$ denotes the line segment from $0$ to $\mathbf{v}_i$.
The following formula due to Stanley makes the mean and variance of the $h^*$-distribution of a zonotope explicitly computable from the generators of the lattice segments.

\begin{theorem}[Stanley~\cite{stanleyzonotopes}, Theorem 2.2]\label{thm:stanleyzonotopes}
The Ehrhart polynomial of a lattice zonotope $Z$ is given by 
\begin{equation*}
L_Z(t)=\sum_{S}m(S)t^{|S|} \, ,
\end{equation*}
where $S$ ranges over all linearly independent subsets of $\{\mathbf{v}_1,\dots,\mathbf{v}_m\}$, and $m(S)$ is the greatest common divisor of all minors of size $|S|$ of the matrix whose columns are the elements of $S$. 
\end{theorem}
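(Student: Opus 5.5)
Stanley's formula (Theorem~\ref{thm:stanleyzonotopes}) is a cited result, so the plan is to reprove it along the lines of Stanley's original argument: a half-open decomposition of $Z$ into parallelepipeds. We proceed by induction on the number $m$ of generators. The aim is to express $Z$ as a disjoint union of translates of half-open parallelepipeds $\Pi_S=\{\sum_{v\in S}\lambda_v v: 0\le\lambda_v<1\}$ (with some closed directions), one for each linearly independent subset $S\subseteq\{\mathbf{v}_1,\dots,\mathbf{v}_m\}$. The standard route is Shephard's tiling of a zonotope by translates of parallelepipeds spanned by independent subsets, refined to a half-open decomposition. Fix a generic linear functional $\ell$, and decompose $Z=Z'+[0,\mathbf{v}_m]$, where $Z'=\sum_{i<m}[0,\mathbf{v}_i]$. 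Then $Z$ is the disjoint union of $Z'$ and the half-open ``sweep'' of the $\ell$-visible part of the boundary of $Z'$ along $\mathbf{v}_m$, with the starting face excluded. The visible boundary is a polyhedral complex of faces of $Z'$, each itself a zonotope on fewer generators, so induction supplies half-open pieces. Each such piece is swept to a piece $\Pi_{S\cup\{\mathbf{v}_m\}}$ whenever $\mathbf{v}_m\notin\mathrm{span}(S)$; when $\mathbf{v}_m$ lies in the span, the contribution degenerates and must not be counted.

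The bookkeeping is made clean by the following approach. Reduce to the statement that $tZ$ is the disjoint union, over independent $S$, of exactly one lattice translate of $t\,\Pi_S^{\circ}$, where $\Pi_S^{\circ}$ is a half-open parallelepiped that is open in all $|S|$ directions, up to a choice of which facets are included. Then count lattice points in each piece.

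The key lemma for counting is that a half-open lattice parallelepiped $\Pi$ spanned by independent integer vectors $S$, with each direction half-open, satisfies $|t\Pi\cap\Z^d|=m(S)\,t^{|S|}$. Proof sketch: $t\Pi$ is tiled by $t^{|S|}$ lattice translates of $\Pi$. Each translate contains exactly as many lattice points as there are points of $\Z^d\cap\mathrm{span}(S)$ modulo the sublattice generated by $S$. That index equals the $|S|$-volume ratio, and the Cauchy--Binet/Smith normal form argument shows it is the gcd of the maximal minors of the matrix with columns $S$. One must also check that translating by the appropriate lattice vector is harmless: each piece is a translate by a sum of generators, which are integer vectors. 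Summing over $S$ gives $L_Z(t)=\sum_S m(S)t^{|S|}$. Note that polynomiality in $t$ also follows, and $S=\emptyset$ contributes $1$.

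The main obstacle is the half-open decomposition itself: showing that every point of $Z$ (and of $tZ$) is covered exactly once, with exactly one parallelepiped per independent subset and none for dependent ones. I would handle it by the sweeping induction above with a generic $\ell$. At each step, verify that the visible lower faces of $Z'$ in direction $\mathbf{v}_m$ are indexed by independent sets not spanning $\mathbf{v}_m$, and that these are in bijection with the independent subsets of $\{\mathbf{v}_1,\dots,\mathbf{v}_m\}$ containing $\mathbf{v}_m$. This bijection is the deletion--contraction recursion for independent sets of the vector matroid, mirroring $T(1,2)$-type counting, and making it precise is where care is required.
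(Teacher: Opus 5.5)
The paper quotes this theorem from Stanley without proof. Your architecture is the standard argument behind Stanley's result: a disjoint decomposition of $Z$ into lattice translates of half-open parallelepipeds $\Pi_S$, one for each independent $S$, followed by $|t\Pi_S\cap\Z^d|=m(S)t^{|S|}$. Your counting lemma, including the remark about integral translation vectors, is fine. Two slips: the pieces must be half-open in each direction, not ``open in all $|S|$ directions'' (the open segment $(0,t)$ has $t-1$ lattice points, not $t$); and independent sets are counted by $T_M(2,1)$, not $T_M(1,2)$.

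\textbf{The gap is the decomposition itself.} That is where all the content lies, and the inductive step you sketch does not work as stated.

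The sweep is fine once phrased correctly. Set $U=\{z\in Z' : z+\varepsilon\mathbf{v}_m\notin Z'\text{ for all }\varepsilon>0\}$. Then $Z=Z'\sqcup\bigl(U+(0,1]\mathbf{v}_m\bigr)$. Here $U$ is determined by $\mathbf{v}_m$ alone (no generic $\ell$ enters), and $U=Z'$ when $\mathbf{v}_m\notin\operatorname{span}(\mathbf{v}_1,\dots,\mathbf{v}_{m-1})$.

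What fails is the claim ``each face of $U$ is a zonotope on fewer generators, so induction supplies half-open pieces.'' The induction hypothesis decomposes each facet of $U$ separately, but the facets overlap along shared lower-dimensional faces. Nothing tells you how to remove the overlaps so that each independent $S'\subseteq\{\mathbf{v}_1,\dots,\mathbf{v}_{m-1}\}$ with $\mathbf{v}_m\notin\operatorname{span}(S')$ survives exactly once. Deletion--contraction gives only the \emph{number} of such $S'$ (the independent sets of $M/\mathbf{v}_m$), not a geometric bijection. Also, pieces with $\mathbf{v}_m\in\operatorname{span}(S)$ cannot occur in an honest decomposition of $U$, because projection along $\mathbf{v}_m$ is injective on $U$; ``not counting'' them would lose points. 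Repairing this geometrically needs a stronger inductive statement, for example decompositions subordinate to a prescribed zonotopal tiling, or Shephard's regular fine tiling made half-open with a generic flag of directions.

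\textbf{A cleaner fix keeps your sweep but never decomposes $U$.} Discard zero generators. Let $\pi$ be the projection along $\mathbf{v}_m$, let $\bar\Lambda=\pi(\Z^d)$ (a lattice, since $\R\mathbf{v}_m$ is rational), and let $g=m(\{\mathbf{v}_m\})$.

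Since $\pi$ maps $tU$ bijectively onto $t\pi(Z')$, the fiber of $tU+(0,t]\mathbf{v}_m$ over a point of $\bar\Lambda\cap t\pi(Z')$ is a half-open segment $u+(0,t]\mathbf{v}_m$. It lies on a line whose lattice points form a coset of $\Z(\mathbf{v}_m/g)$, so it contains exactly $tg$ lattice points. Fibers over points outside $\bar\Lambda$ contain none. Hence $L_Z(t)=L_{Z'}(t)+tg\,\bar L(t)$, where $\bar L(t)=|t\pi(Z')\cap\bar\Lambda|$ and $\pi(Z')$ is the zonotope of the contracted configuration $\pi(\mathbf{v}_1),\dots,\pi(\mathbf{v}_{m-1})$.

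Run the induction over arbitrary lattices $\Lambda$, with $m(S):=[\Lambda\cap\operatorname{span}S:\Z S]$. By your Smith normal form argument, this equals the gcd of maximal minors when $\Lambda=\Z^d$. For $S=S'\cup\{\mathbf{v}_m\}$ and $W=\operatorname{span}S$, the index factors as $[\Z^d\cap W:\Z S]=[\Z(\mathbf{v}_m/g):\Z\mathbf{v}_m]\cdot[\pi(\Z^d\cap W):\Z\pi(S')]$. We have $\pi(\Z^d\cap W)=\bar\Lambda\cap\pi(W)$, and $\pi(S')$ is independent if and only if $S$ is. So this reads $m(S)=g\,\bar m(\pi(S'))$, which is exactly the deletion--contraction identity that closes the induction.
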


\begin{proposition}\label{prop:zonotope-mean-variance}
   For  $Z=\sum_{i=1}^m [0,\mathbf{v}_i]$ a lattice $d$-zonotope and $X_Z\sim \hdis(Z)$, we have 
\[ 
\E[X_Z] = \frac{d+1}{2} - \frac{1}{d}\left(\frac{\sum_{|S|=d-1}m(S)}{\sum_{|S|=d}m(S)}\right)\, .
\]
For $d\geq 2$, we have 
\[
\Var[X_Z] = \frac{2}{d(d-1)}\left(\frac{\sum_{|S|=d-2}m(S)}{\sum_{|S|=d}m(S)}\right)+\frac{d+1}{12}-\left( \frac{1}{d}\left(\frac{\sum_{|S|=d-1}m(S)}{\sum_{|S|=d}m(S)}\right)\right)^2\, .
\]
\end{proposition}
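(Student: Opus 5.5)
The plan is to substitute Stanley's formula (Theorem~\ref{thm:stanleyzonotopes}) into the mean and variance formulas of Theorem~\ref{thm:meanvariance}. Since $Z$ is a lattice $d$-zonotope in $\R^d$, the vectors $\mathbf{v}_i$ span $\R^d$, so $L_Z(t)$ has degree exactly $d$. Reading off coefficients of $L_Z(t)=\sum_S m(S)t^{|S|}$ gives
\[
c_k=\sum_{|S|=k}m(S) \qquad (0\le k\le d),
\]
and in particular $\nvol(Z)=d!\,c_d=d!\sum_{|S|=d}m(S)$. Write $\Sigma_k:=\sum_{|S|=k}m(S)$.

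For the mean, insert these into \eqref{eq:mean}:
\[
\E[X_Z]=\frac{d+1}{2}-\frac{(d-1)!\,\Sigma_{d-1}}{d!\,\Sigma_d}=\frac{d+1}{2}-\frac{1}{d}\frac{\Sigma_{d-1}}{\Sigma_d},
\]
which is the claimed formula.

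For the variance with $d\ge2$, I would not expand \eqref{eq:variance} directly. It is cleaner to write $\E[X_Z]=\frac{d+1}{2}-a$ with
\[
a=\frac{(d-1)!c_{d-1}}{\nvol(Z)}=\frac{1}{d}\frac{\Sigma_{d-1}}{\Sigma_d}.
\]
Then
\[
(d+1)\E[X_Z]-\E[X_Z]^2=\frac{(d+1)^2}{4}-a^2,
\]
because the cross terms $-(d+1)a$ and $+(d+1)a$ cancel. Combining the constants gives
\[
\frac{(d+1)^2}{4}-\frac{(d+1)(3d+2)}{12}=\frac{(d+1)(3d+3-3d-2)}{12}=\frac{d+1}{12}.
\]
The remaining term is
\[
\frac{2(d-2)!\,c_{d-2}}{d!\,c_d}=\frac{2}{d(d-1)}\frac{\Sigma_{d-2}}{\Sigma_d}.
\]
Adding these pieces yields exactly the stated variance. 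This is the same simplification used to obtain $\frac{d+1}{12}$ in Corollary~\ref{cor:momentsdilates}.

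There is no real obstacle here. The only points to check are that $c_d=\Sigma_d\neq 0$, which holds by full-dimensionality, and that the grouping $\frac{(d+1)^2}{4}-a^2$ is done correctly, so that no stray terms linear in $a$ survive.
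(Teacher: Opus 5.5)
Your proposal is correct and follows the paper's proof: read off $c_k=\sum_{|S|=k}m(S)$ from Stanley's formula, use $\nvol(Z)=d!\,c_d$, and substitute into Theorem~\ref{thm:meanvariance}. Your grouping $(d+1)\E[X_Z]-\E[X_Z]^2=\frac{(d+1)^2}{4}-a^2$ spells out the ``simplifying'' step that the paper leaves to the reader, and the algebra checks out.
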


\begin{proof}
    By Theorem \ref{thm:stanleyzonotopes}, $c_k=\sum_{|S|=k}m(S)$ is the coefficient of $t^k$ in $L_Z(t)$, and $\nvol(Z)=d!\,c_d$. 
    Theorem \ref{thm:meanvariance} gives 
    \[\E[X_Z] = \frac{(d+1)}{2}- \frac{(d-1)!\; c_{d-1}}{\nvol(Z)}= \frac{d+1}{2}-\frac{1}{d}\left(\frac{c_{d-1}}{c_d}\right).\]
    For the variance, Theorem \ref{thm:meanvariance} gives 
    \[\Var[X_Z]=\frac{2(d-2)!c_{d-2}}{\nvol(Z)}+(d+1)\E[X_Z]-\frac{(d+1)(3d+2)}{12}-\E[X_Z]^2.\]
    Substituting $\E[X_Z]$ and simplifying, we obtain the desired variance formula.
\end{proof}

\begin{example}\label{ex:unimodularzonotope}
    When $Z$ is unimodular, every nonzero maximal minor equals $\pm 1$, and the formula $\sum_{|S|=k}m(S)$ counts the number of independent sets of size $k$ in the linear matroid represented by the generating vectors of $Z$ \cite{stanleyzonotopes}.
    Writing $M$ for that matroid and $I_k(M)$ for the number of independent sets of size $k$, we have that $\nvol(Z)=d! I_d(M)$ and so we can rewrite $\E[X_Z]$ as 
    \[
    \E[X_Z]= \frac{d+1}{2}- \frac{1}{d}\left(\frac{I_{d-1}(M)}{I_d(M)}\right)\, .
    \]
    \end{example}
    
    \begin{example}\label{ex:graphicalzonotope}For the zonotope $Z_G$ generated by the vectors $\mathbf{e}_i-\mathbf{e}_j$ over the edges $\{i<j\}$ of a connected graph $G$, the matroid $M$ is the graphic matroid of $G$, so $I_d(M)$ is the number of spanning trees of $G$ and $I_{d-1}(M)$ is the number of spanning forests of $G$ with exactly two components. 
    For example, when $G=K_4$, we have that 
    \[
    \E[X_{Z_{K_4}}] = 2-\frac{1}{3}\cdot \frac{15}{16} = \frac{27}{16}\, .
    \]
\end{example}

We end this subsection with the observation that the moments of $X_P$ are determined by the coefficients of the Ehrhart polynomial of $P$ and vice versa.

\begin{theorem}
    \label{thm:momentscoefficients}
    Consider a lattice polytope $P$ with $X_P\sim \hdis(P)$ and Ehrhart polynomial $L_P(t)=\sum_{j=0}^dc_jt^j$.
    For any $0\leq k\leq d$, the normalized volume and the values $\E[X_P^r]$ for $r=0,1,\ldots,k$ are determined by $c_{d-r}$ for $r=0,1,\ldots,k$, and vice versa.
\end{theorem}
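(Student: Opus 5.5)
The plan is to generalize the proof of Theorem~\ref{thm:meanvariance}, using the binomial-basis expansion
\[
L_P(t)=\sum_{j=0}^d h_j^*\binom{t+d-j}{d}.
\]
For each $j$, expand $\binom{t+d-j}{d}=\frac{1}{d!}\prod_{i=1}^{d}(t+i-j)$ as a polynomial in $t$. The coefficient of $t^{d-r}$ is $\frac{1}{d!}e_r(1-j,2-j,\ldots,d-j)$, where $e_r$ is the $r$-th elementary symmetric polynomial. This is a polynomial $q_r(j)$ in $j$ of degree exactly $r$. Its leading coefficient is
\[
\frac{(-1)^r\binom{d}{r}}{d!},
\]
since each factor contributes $-j$ and there are $\binom{d}{r}$ ways to choose $r$ factors. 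This coefficient is nonzero for $0\le r\le d$.

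Summing against $h_j^*$ and dividing by $\nvol(P)$ gives, for each $0\le r\le d$,
\[
\frac{c_{d-r}}{\nvol(P)}=\E[q_r(X_P)]=\sum_{s=0}^{r} a_{r,s}\,\E[X_P^s],
\]
where the $a_{r,s}$ depend only on $d$ and $a_{r,r}=(-1)^r\binom{d}{r}/d!\neq 0$. Thus the vector $(c_d,c_{d-1},\ldots,c_{d-k})$ equals $\nvol(P)$ times a lower-triangular matrix with nonzero diagonal applied to $(\E[X_P^0],\ldots,\E[X_P^k])$. Note that $\E[X_P^0]=1$.

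For the forward direction, suppose the normalized volume and the moments $\E[X_P^r]$ for $r\le k$ are known. Then each $c_{d-r}$ for $r\le k$ is computed directly from the displayed identity.

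For the converse, suppose $c_d,\ldots,c_{d-k}$ are known. First recover $\nvol(P)=d!c_d$. Then solve the triangular system recursively: for $r=1,\ldots,k$,
\[
\E[X_P^r]=\frac{1}{a_{r,r}}\left(\frac{c_{d-r}}{\nvol(P)}-\sum_{s<r}a_{r,s}\E[X_P^s]\right).
\]
Here $\nvol(P)>0$ because $P$ is $d$-dimensional.

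The only step needing care is confirming that the degree of $q_r$ is exactly $r$ with a nonzero leading coefficient, and this is the elementary symmetric polynomial computation above. The cases $r=1,2$ reproduce the computations in Theorem~\ref{thm:meanvariance}, which serves as a consistency check.
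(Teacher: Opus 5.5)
Your proposal is correct and follows essentially the same route as the paper. Both expand $\binom{t+d-j}{d}$ through the elementary symmetric functions of $1-j,\ldots,d-j$, writing $c_{d-r}$ as $\nvol(P)$ times a lower-triangular combination of $\E[X_P^s]$ for $s\le r$. Your diagonal entry $(-1)^r\binom{d}{r}/d!\neq 0$ is exactly the top ($\ell=k$) term of the paper's expansion, and it makes the paper's ``triangular and invertible'' claim explicit.
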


\begin{proof}
    Let $e_{d,k}(x_1,\ldots,x_d)$ denote the degree $k$ elementary symmetric function on $d$ variables.
    Observe that 
    \begin{align*}
    L_P(t) & =\sum_{j=0}^dh^*_j\binom{t+d-j}{d} \\
    & = \sum_{j=0}^d\frac{1}{d!}h^*_j\sum_{k=0}^de_{d,k}(1-j,2-j,\ldots,d-j)t^{d-k}\\
    &= \sum_{j=0}^d\frac{1}{d!}h^*_j\sum_{k=0}^d \left( \sum_{\ell=0}^k \binom{d-k+\ell}{\ell} e_{d,k-\ell}(1,2,\ldots,d)(-1)^{\ell}j^{\ell}\right) t^{d-k}\\
    & = \sum_{k=0}^d\left[ \sum_{\ell=0}^k\nvol(P)\binom{d-k+\ell}{\ell}e_{d,k-\ell}(1,2,\ldots,d)\frac{1}{d!}(-1)^{\ell} \left( \sum_{j=0}^d \frac{h_j^*}{\nvol(P)}j^{\ell}\right) \right]t^{d-k}\\
    &= \sum_{k=0}^d\left[ \sum_{\ell=0}^k\nvol(P)\binom{d-k+\ell}{\ell}e_{d,k-\ell}(1,2,\ldots,d)\frac{1}{d!}(-1)^{\ell} \E[X_P^\ell] \right]t^{d-k} \, .
    \end{align*}
    Thus, the coefficient $c_{d-k}$ is determined by $\nvol(P)$ and linear combinations of $\E[X_P^\ell]$ for $\ell=0,\ldots,k$, and the linear system relating these values is triangular and invertible.
    Hence, our proof is complete.
\end{proof}

Note that when $k=0$ in the proof above, we have that $c_d=\nvol(P)/d!$, as expected.

\begin{remark}\label{rem:endquasi}
    When $P$ is a rational polytope, the function $L_P(t)$ is a quasi-polynomial rather than a polynomial.
    The higher moments of distributions defined by $h$-polynomials of arbitrary quasi-polynomials play a key role in the classification of eventually nondecreasing quasi-polynomials~\cite{endquasipoly} when the $h$-vector is non-negative.
\end{remark}

\subsection{Cluster points}\label{ssec:cluster}

Observe that every $h^*$-distribution of a $d$-dimensional polytope is a point in the $d$-dimensional probability simplex $\conv(e_0,e_1,\ldots,e_d)$.
The following proposition shows that every $h^*$-distribution is an isolated point in this simplex, and cluster points must lie on a specific facet of the probability simplex.

\begin{proposition}\label{prop:isolatedpoints}
    If $(h_0,\ldots,h_d)$ is an $h^*$-distribution for a $d$-dimensional lattice polytope $P$, then the distribution is isolated in the Euclidean topology in the probability simplex.
    Any cluster point of the set of $h^*$-distributions must satisfy $h_0=0$.
\end{proposition}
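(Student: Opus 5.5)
The plan rests on the fact that $h_0^*=1$ for every lattice polytope, since the constant term of the Ehrhart series is $1$. So the $h^*$-distribution of $P$ has zeroth coordinate exactly $1/\nvol(P)$, and all its coordinates lie in $\frac{1}{\nvol(P)}\Z_{\geq 0}$. I would organize the whole argument around the normalized volume $V=\nvol(P)\in\Z_{\geq 1}$. The key steps are as follows.

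First I would handle the cluster point claim. Let $q=(q_0,\ldots,q_d)$ be a cluster point of $h^*$-distributions of $d$-dimensional polytopes. Then there is a sequence of such distributions, pairwise distinct and different from $q$, converging to $q$.

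\begin{itemize}
\item Suppose $q_0>0$. The zeroth coordinates $1/V_n$ converge to $q_0$, so eventually $V_n\le 2/q_0$.
\item Hence the volumes $V_n$ take only finitely many values.
\item For a fixed volume $V$, the possible distributions are vectors in $\frac1V\Z_{\geq0}^{d+1}$ with coordinate sum $1$, and there are finitely many of these.
\item So the sequence ranges over a finite set, which contradicts the points being pairwise distinct.
\end{itemize}

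Therefore $q_0=0$.

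Second, I would prove that each $h^*$-distribution is isolated. Fix an $h^*$-distribution $p$ with volume $V$, so $p_0=1/V>0$. I would take a small neighborhood of $p$, say of radius less than $1/(2V)$. Any $h^*$-distribution $p'$ in this neighborhood has $p'_0=1/V'$ within $1/(2V)$ of $1/V$, which forces $V'\le 2V$. Again only finitely many candidate points remain, so shrinking the radius excludes all of them except $p$ itself. Equivalently, if $p$ were not isolated it would be a cluster point with $p_0>0$, contradicting the first step.

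There is no serious obstacle. The only point needing care is fixing the ambient simplex: the dimension $d$ should be fixed so that we are in $\conv(e_0,\ldots,e_d)$. We must also remember that $h^*$-vectors are integer vectors, by Stanley's nonnegativity result together with integrality, so that the finiteness for bounded volume holds.
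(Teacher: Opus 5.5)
Your proof is correct. Its skeleton matches the paper's: both arguments rest on $h_0 = 1/\nvol(P)$ together with finiteness of the set of $h^*$-distributions of bounded normalized volume. The genuine difference is where that finiteness comes from.

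The paper cites the theorem of Lagarias and Ziegler that there are only finitely many $d$-dimensional lattice polytopes of normalized volume at most $V$ up to unimodular equivalence. (The paper quotes it only for simplices, although the proposition concerns all polytopes.) Finiteness of the distributions then follows because the $h^*$-vector is a unimodular invariant.

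You instead note that, for fixed $V$, the $h^*$-vector is a vector of nonnegative integers summing to $V$. There are at most $\binom{V+d}{d}$ such vectors. So nothing beyond $h_0^*=1$, integrality of the $h^*$-coefficients, and Stanley's nonnegativity is needed.

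Your route is strictly more elementary. It shows the proposition holds for any family of nonnegative integer vectors with first entry $1$, and it sidesteps the simplex-versus-polytope wording in the citation. The paper's route establishes a much stronger geometric finiteness statement than this result requires.

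You also make explicit a step the paper leaves implicit in its cluster-point argument: a sequence of pairwise distinct $h^*$-distributions must have unbounded volumes.
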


\begin{proof}
    Note that $h_0=1/\nvol(P)$ since $h_0^*=1$ for every lattice polytope.
    Lagarias and Ziegler~\cite{LagariasZiegler} proved that for fixed positive integers $d$ and $V$, there are finitely many unimodular-equivalence classes of $d$-dimensional lattice simplices with normalized volume at most $V$. 
    Hence, for any fixed $V$ and $d$, there are only finitely many $h^*$-distributions in the $d$-dimensional probability simplex with $h_0>1/V$.
    Therefore, every $h^*$-distribution is an isolated point.

    Further, the only way for a cluster point to be obtained is as a limit of an infinite sequence of $h^*$-distributions, and thus each cluster point can be obtained as an infinite sequence of lattice polytopes with strictly increasing volumes. 
    Thus, the sequence of $0$-th coefficients goes to zero.
\end{proof}

Given that every $h^*$-distribution is an isolated point from a metric perspective, this motivates the question of when an infinite sequence or set of $h^*$-distributions has a cluster point, as illustrated by the following three examples.

\begin{example}
For every lattice simplex $S$, there exists a unique matrix $H$ representing the vertices of $S$ up to unimodular equivalence \cite{Schrivjer}, which are called the \emph{Hermite normal form of} $S$.
One special family of simplices are one-row Hermite normal form simplices, which are those unimodularly equivalent to the convex hull of the rows of an integer matrix as in~\eqref{eq:onerowintro}, specified by parameters $a_1, \dots, a_{d-1}, N$ with \(0\leq a_i< N\) for all \(i\). 
Note that the normalized volume of these simplices is exactly the parameter $N$.

\begin{equation}\label{eq:onerowintro}
H = 
\begin{bmatrix}
		 0 &  0 &  0 & \cdots &  0 &  0 &  0\\
		 1 &  0 &  0 & \cdots &  0 &  0 &  0\\
		 0 &  1 &  0 & \cdots &  0 &  0 &  0\\
		0 &  0 &  1 & \cdots &  0 &  0 &  0\\
		\vdots & \vdots & \vdots & \ddots & \vdots & \vdots & \vdots \\
		 0 &  0 &  0 & \cdots &  1 &  0 &  0\\
		 0 &  0 &  0 & \cdots &  0 &  1 &  0\\
		 a_1 & a_2 & a_3 & \cdots & a_{d-2} & a_{d-1} & N
	\end{bmatrix}.
\end{equation}

For a fixed vector $(a_1,
\ldots,a_{d-1})$, we define $S_N$ to be the simplex defined as the convex hull of the rows of~\eqref{eq:onerowintro}.
For 
\[
M= \operatorname{lcm}\left(a_1,a_2,\dots, a_{d-1}, -1+\sum_i a_i\right) \, ,
\]
Results of Bajo et al.~\cite[Theorem 2.10 and Corollary 5.4]{BajoBraunCodenottiHofscheierVindas} imply that the distribution arising as the coefficient vector of
\[
\frac{h^*(S_{M+1};z)-1}{h^*(S_{M+1};1)-1}
\]
is a limit of Ehrhart $h^*$-distributions.
For example, for positive integers $q$, consider the special case where $N=(d-2)q+1$ and $S_N$ is the $d$-simplex in one-row Hermite normal form with last row $(1,\dots,1,N)$.
From work of Bajo et al.~\cite[Theorems 2.10 and 3.1]{BajoBraunCodenottiHofscheierVindas}, it follows that $\operatorname{gcd}(M,N)=(d-2,(d-2)q+1)=1$ and 
\[
h^*(S;z) = 1 + q\sum_{i=2}^{d-1}z^i \, .
\]
Thus, 
\[
\hdis(S_N)=\left(\frac{1}{N}, 0 , \frac{q}{N}, \frac{q}{N},\ldots, \frac{q}{N},0\right) 
\]
and as $q\to \infty$ we obtain the limit point
\[
\left(0, 0 , \frac{1}{d-2}, \frac{1}{d-2},\ldots, \frac{1}{d-2},0\right) \, .
\]
\end{example}

\begin{example}
    For every choice of $m, d,k \in \Z $ such that $m\geq 1$, $d\geq 2,$ and $1\leq k\leq \lfloor \frac{d+2}{2} \rfloor$, Higashitani~\cite{higashitani} constructed a $d$-dimensional simplex with $h^\ast$-polynomial $1 + mz^k$ and normalized volume $m+1$.
    Thus, for fixed $d$ and fixed $k$ such that $1\leq k\leq \lfloor \frac{d+2}{2} \rfloor$, we can construct a sequence of simplices $\Delta_1, \Delta_2, \ldots$ such that $\Delta_m$ has $h^\ast$-polynomial $1 + mz^k$ and thus \[\hdis(\Delta_m) = \left(\frac{1}{m+1}, 0 ,0, \ldots, 0, \frac{m}{m+1}, 0 \ldots, 0\right) \]
    where $\frac{m}{m+1}$ appears in the $(k+1)$st entry. 
    Note that 
    \[\lim_{m\rightarrow \infty} \hdis(\Delta_m) = (0, \dots, 0, 1, 0, \dots, 0) = e_{k+1}. 
    \]
    So, in the  probability simplex in $\R^d$, each standard basis vector $e_{k+1}$ for $1\leq k \leq \lfloor \frac{d+2}{2} \rfloor$ is a limit point of $h^\ast$-distributions. 
\end{example}

\begin{example}
    \label{ex:dilates3d}
    Consider a random sample of $3$-dimensional simplices with volume between $8$ and $1000$, obtained by randomly sampling from matrices in Hermite normal form.
    For each simplex $P$, we compute the $h^*$-distribution of the first twenty-four dilates of $P$ and plot the resulting trajectory of the last two components of the $h^*$-distributions for the dilates, i.e., of $(h_2^*/V,h_3^*/V)$ where $P$ has normalized volume $V$.
    Figure~\ref{fig:trajectories} plots the resulting trajectories of the $h^*$-distributions of dilates, illustrating how the distributions for dilates all approach the same point, namely $(2/3,1/6)$.
\end{example}

\begin{figure}
    \centering
    \includegraphics[width=0.8\linewidth]{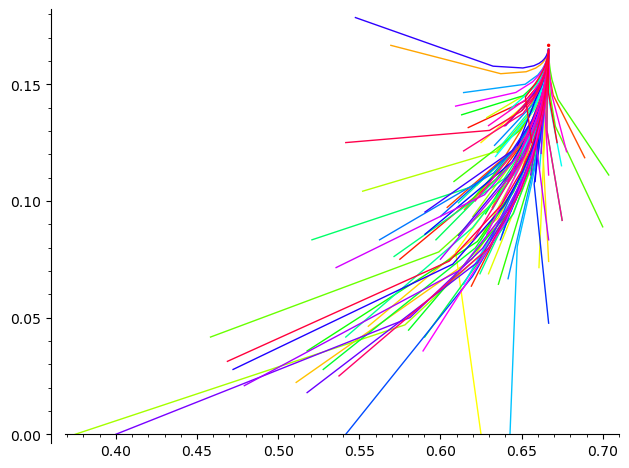}
    \caption{For $100$ random $3$-dimensional simplices, a plot of the trajectories of the last two coordinates of the $h^*$-distributions for the dilates of each simplex.
    The horizontal axis corresponds to the value $h^*_2/V$ while the vertical axis corresponds to the value $h^*_3/V$.
    This illustrates that the truncated $h^*$-distributions for dilates of $P$ approach the point $(4/6,1/6)$.}
    \label{fig:trajectories}
\end{figure}

As suggested by Example~\ref{ex:dilates3d}, we will prove next that the sequence of $h^*$-distributions arising from the sequence of polytopes $tP$ for $t=1,2,\ldots$ always converges to the same limit point. 
Let $\mathfrak{S}_d$ denote the symmetric group on $d$ elements, and let $\des(\pi)$ denote the descent statistic on $\pi\in \mathfrak{S}_d$.
The \emph{$d$-th Eulerian distribution} is the distribution associated to the polynomial
\[
\sum_{\pi\in \mathfrak{S}_d}z^{1+\des(\pi)} \, .
\]
We denote this distribution by $\eul_d$.

\begin{example}\label{ex:euleriandistribution}
For $d=3$, the Eulerian polynomial is $z+4z^2+z^3$, and thus 
\[
\eul_3=(0,1/6,4/6,1/6) \, .
\]
Hence, the cluster point $(4/6,1/6)$ for each trajectory shown in Example~\ref{ex:dilates3d} corresponds to a truncation of the Eulerian distribution.
\end{example}

Observe that in Corollary~\ref{cor:momentsdilates}, the term \((d+1)/2\) is the mean of the Eulerian distribution for the statistic $1+\des$, and \((d+1)/12\) is its variance for \(d\ge 2\).  
Thus, Corollary~\ref{cor:momentsdilates} predicts the following theorem.

\begin{theorem}\label{thm:dilates}
 Let $P$ be a $d$-dimensional lattice polytope.
 Then
 \[
 \lim_{t\to \infty}\hdis(tP)=\eul_d \, .
 \]
\end{theorem}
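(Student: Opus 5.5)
Our plan is to use the classical description of $h^*$ of a dilate via the Ehrhart polynomial, and then pass to the limit coefficientwise. Write $L_P(t)=\sum_{k=0}^d c_k t^k$. Since $L_{tP}(s)=L_P(ts)$, the Ehrhart series of $tP$ is
\[
\sum_{s\geq 0} L_P(ts)z^s=\sum_{k=0}^d c_k t^k\sum_{s\geq 0}s^kz^s=\sum_{k=0}^d c_kt^k\frac{A_k(z)}{(1-z)^{k+1}},
\]
where $A_k(z)=\sum_{\pi\in\mathfrak{S}_k}z^{1+\des(\pi)}$ for $k\geq 1$ and $A_0(z)=1$. Here we use the standard identity $\sum_{s\ge0}s^kz^s=A_k(z)/(1-z)^{k+1}$.

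Putting everything over the common denominator $(1-z)^{d+1}$, we obtain
\[
h^*(tP;z)=\sum_{k=0}^d c_kt^k A_k(z)(1-z)^{d-k}.
\]
This is a polynomial in $t$ of degree $d$ whose coefficients are polynomials in $z$ of degree at most $d$. Its leading term is $c_dt^dA_d(z)$.

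Next we normalize. We have $\nvol(tP)=t^d\nvol(P)=t^d d!\,c_d$, so
\[
\frac{h^*(tP;z)}{\nvol(tP)}=\frac{A_d(z)}{d!}+\sum_{k=0}^{d-1}\frac{c_k}{d!\,c_d}t^{k-d}A_k(z)(1-z)^{d-k}.
\]
For fixed $d$ and fixed $P$, each coefficient of $z^j$ in the sum is $O(1/t)$ as $t\to\infty$, since there are finitely many terms, each with a fixed polynomial factor. So each coordinate of $\hdis(tP)$ converges to the $z^j$-coefficient of $A_d(z)/d!$. This coefficient is exactly the $j$-th entry of $\eul_d$, because $A_d(1)=d!$. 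Convergence in each of the finitely many coordinates gives convergence in the probability simplex.

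The only step needing care is the Eulerian identity $\sum_s s^kz^s=A_k(z)/(1-z)^{k+1}$ with the convention $1+\des$, together with the edge case $k=0$. We would cite the identity as standard (Carlitz/Riordan, or Beck–Robins). Alternatively, it follows by applying $(z\,d/dz)^k$ to $1/(1-z)$ and checking the Eulerian recurrence. With that in hand, the rest is bookkeeping.

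As a sanity check, the result is consistent with Corollary~\ref{cor:momentsdilates}: the mean and variance of $\hdis(tP)$ tend to $(d+1)/2$ and $(d+1)/12$, which are the mean and variance of $\eul_d$.
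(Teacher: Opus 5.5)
Your proof is correct, but it takes a different route from the paper's.

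The paper quotes the Beck--Stapledon theorem: for large $t$, the polynomial $h^*(tP;z)$ is real-rooted, and its roots converge to those of the Eulerian polynomial, with one of them tending to $0$. It then factors $h^*(tP;z)$ over its roots and passes to the limit in the normalized product $\prod_i(\beta_{t,i}-z)/(\beta_{t,i}-1)$. There the factor attached to the vanishing root tends to $z$.

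You instead derive the closed form $h^*(tP;z)=\sum_{k=0}^d c_kt^kA_k(z)(1-z)^{d-k}$. This comes from $L_{tP}(s)=L_P(ts)$ and the classical identity $\sum_{s\ge 0}s^kz^s=A_k(z)/(1-z)^{k+1}$. After dividing by $\nvol(tP)=t^d d!\,c_d$, you read off convergence coefficient by coefficient.

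Your argument is elementary and self-contained:
\begin{itemize}
\item it uses nothing about roots, real-rootedness, or the eventual degree of $h^*(tP;z)$;
\item it avoids handling the degenerating factor $1-z/\beta_{t,i}$;
\item it gives an explicit $O(1/t)$ rate, with error terms governed by the ratios $c_k/c_d$.
\end{itemize}
The same expansion also explains why only $c_{d-1}$ and $c_{d-2}$ enter the $1/t$ and $1/t^2$ corrections to the mean and variance in Corollary~\ref{cor:momentsdilates}. Only the terms with $d-k\le 2$ survive one or two differentiations at $z=1$.

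What the paper's route buys is a stronger structural conclusion: convergence of the roots themselves, from which coefficientwise convergence follows by continuity. For the statement at hand, though, your computation is the more direct proof.
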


\begin{proof}
Beck and Stapledon~\cite{BeckStapledon} proved that if the roots of the Eulerian polynomial are 
\[
\rho_1<\rho_2<\cdots <\rho_{d+1}=0 \, ,
\]
and the roots of $h^*(tP;z)$ are
\[
\beta_{t,1}<\beta_{t,2}<\cdots <\beta_{t,d+1} \, ,
\]
then as $t\to \infty$ we have that $\beta_{t,i}\to \rho_i$.
Thus, we have that
\[
h^*(tP;z)=\prod_{i=1}^{d+1}\left(1-\frac{z}{\beta_{t,i}}\right)
\]
and hence,
\begin{align*}
    \hdis(tP;z) & = \left(\prod_{i=1}^{d+1}\left(1-\frac{z}{\beta_{t,i}}\right)\right) / \left(\prod_{i=1}^{d+1}\left(1-\frac{1}{\beta_{t,i}}\right)\right) \\
    & = \left(\prod_{i=1}^{d+1}\left(1-\frac{z}{\beta_{t,i}}\right)\right)\left(\prod_{i=1}^{d+1}\frac{\beta_{t,i}}{\beta_{t,i}-1}\right) \\
    & = \prod_{i=1}^{d+1}\frac{\beta_{t,i}-z}{\beta_{t,i}-1} \\
    & \rightarrow z\prod_{i=1}^{d}\frac{\rho_i-z}{\rho_i-1}
\end{align*}
as $t \to \infty$.
The final expression is the distribution for the Eulerian polynomial.
\end{proof}

\subsection{A combinatorial application}\label{ssec:combinatorialapplication}

We next discuss a combinatorial application of Theorem~\ref{thm:dilates}.
Let $P = [0,1]^d$ be the $d$-dimensional unit cube, and let $kP=[0,k]^d$ be the $k$-th dilate of $P$.
The Ehrhart series of $kP$ is 
\[
\sum_{t\geq 0}(tk+1)^dz^t\, ,
\]
and there is a beautiful connection between this series and permutation statistics involving wreath products of symmetric and cyclic groups.
We will give a condensed presentation of these permutation statistics in this subsection, see Beck and Braun~\cite{BeckBraunEulerian} and the references therein for more details and historical context.

Let $\omega$ be a primitive $k$-th root of unity, and consider the set of pairs $(\pi,\epsilon)$ where $\pi\in\mathfrak{S}_d$ and $\epsilon\in\{\omega^{k-1},\omega^{k-2}, \ldots,\omega^1,\omega^0\}^d$.
By convention, we define $\pi_{d+1}=d+1$, $\pi_0=0$, $\epsilon_{d+1}=1$, and $\epsilon_0=1$.
We will represent an element $(\omega^{c},j)\in \{\omega^{k-1},\omega^{k-2}, \ldots,\omega^1,\omega^0\}\times \{1,2,\ldots,d\}$ as $j^{c}$.
Totally order the elements of $\{\omega^{k-1},\omega^{k-2}, \ldots,\omega^1,\omega^0\}\times \{1,2,\ldots,d\}$ by $j^{c_j}<k^{c_k}$ if $c_j>c_k$ or if both $c_j=c_k$ and $j<k$.
For a pair $(\pi,\epsilon)=([\pi(1),\ldots,\pi(d)],(\omega^{c_1},\ldots,\omega^{c_d}))\in \mathfrak{S}_d\times \{\omega^{k-1},\ldots,\omega^0\}^d$, the \emph{descent set} is
\[
\mathrm{Des}(\pi,\epsilon):=\{j\in \{0,1,\ldots,d-1\}:\pi(j)^{c_j}>\pi(j+1)^{c_{j+1}}\}\, ,
\]
and the \emph{descent statistic} is
\[
\mathrm{des}(\pi,\epsilon):=|\mathrm{Des}(\pi,\epsilon)|\, .
\]
We call these values the \emph{Eulerian statistics for wreath products}, and they are connected to dilates of unit cubes via the identity
\[
\sum_{t\geq 0}(tk+1)^dz^t = \frac{\sum_{\substack{(\pi,\epsilon)\in \mathfrak{S}_d\times \{\omega^{k-1},\ldots,\omega^0\}^d}}z^{\mathrm{des}(\pi,\epsilon)}}{(1-z)^{d+1}}\, .
\]
Define the \emph{$(d,k)$-Eulerian polynomial for the wreath product} to be 
\[
A_{k,d}(z)=\sum_{t=0}^dA_{k,d,t}z^t:=\sum_{\substack{(\pi,\epsilon)\in \mathfrak{S}_d\times \{\omega^{k-1},\ldots,\omega^0\}^d}}z^{\mathrm{des}(\pi,\epsilon)} \, ,
\]
and note that the generating function identity above yields
\[
h^*([0,k]^d;z)=A_{k,d}(z) \, .
\]

\begin{example}\label{ex:dkeulerianmeanvariance}
Using Corollary~\ref{cor:momentsdilates} and the fact that $h^*([0,k]^d;z)=h^*(k\cdot [0,1]^d;z)$, we have that the mean and variance of the $(d,k)$-Eulerian distribution are $\frac{d+1}{2}-\frac{1}{k}$ and $(d+1)/12$, respectively.
Thus, these are distributions with identical variances that interpolate between the Eulerian distribution on $[0,d-1]$ (i.e., the generating function for the statistic $\des$) and the Eulerian distribution on $[1,d]$ (the distribution of the statistic $1+\des$).
\end{example}

The following theorem is an interesting consequence of the convergence of $h^*$-distributions for dilates.

\begin{theorem}
    \label{thm:eulerianwreath}
    For positive integers $d,t\geq 1$, we have
    \[
    \lim_{k\to \infty}\frac{A_{k,d,t}}{k^d} = A_{1,d,t-1}\, ,
    \]
    where $A_{1,d,t-1}$ is the number of permutations in $\mathfrak{S}_d$ with $t-1$ descents.
\end{theorem}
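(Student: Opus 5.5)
The plan is to read the statement as a coordinatewise form of Theorem~\ref{thm:dilates} applied to the unit cube $P=[0,1]^d$. For each positive integer $k$ we have $h^*(kP;z)=A_{k,d}(z)$, so $\hdis(kP)$ is the coefficient vector of $A_{k,d}(z)/A_{k,d}(1)$. By Theorem~\ref{thm:dilates}, taking the limit along the dilation parameter $k$, we get $\hdis(kP)\to\eul_d$ as $k\to\infty$.

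Here $\eul_d$ is the coefficient vector of $\frac{1}{d!}\sum_{\pi\in\mathfrak{S}_d}z^{1+\des(\pi)}$. Its $t$-th coordinate is therefore $A_{1,d,t-1}/d!$, with the convention that $A_{1,d,-1}=0$, which covers the coordinate $t=0$. For $k=1$ the wreath-product statistic reduces to the ordinary descent statistic on $\mathfrak{S}_d$, so $A_{1,d,s}$ is indeed the number of permutations with $s$ descents, as the statement asserts. One point needs a check: $\mathrm{Des}(\pi,\epsilon)$ ranges over $\{0,\ldots,d-1\}$, and when $k=1$ position $0$ is never a descent because $\pi_0=0$ with color $0$. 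So the count is the usual one.

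Next I would identify the normalization. We have $A_{k,d}(1)=\nvol([0,k]^d)=d!\,k^d$, either because the $h^*$-coefficients sum to the normalized volume, or directly because $|\mathfrak{S}_d\times\{\omega^{k-1},\ldots,\omega^0\}^d|=d!\,k^d$. The $t$-th coordinate of $\hdis(kP)$ is therefore $A_{k,d,t}/(d!\,k^d)$.

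Convergence of probability vectors in the finite-dimensional simplex is coordinatewise, so for each $t$
\[
\frac{A_{k,d,t}}{d!\,k^d}\to \frac{A_{1,d,t-1}}{d!}\quad (k\to\infty).
\]
Multiplying by $d!$ gives the claim. For $t>d$ both sides vanish, since $A_{k,d}$ has degree at most $d$ and $A_{1,d,t-1}=0$ when $t-1\geq d$.

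The only real subtlety is the index bookkeeping. The Eulerian distribution in the paper is defined via $1+\des$, which is exactly what produces the shift $t\mapsto t-1$, so I would state the indexing of $\eul_d$ explicitly, as in Example~\ref{ex:euleriandistribution}. One might also worry about applying Theorem~\ref{thm:dilates}, whose proof relies on root convergence from Beck--Stapledon. Since that theorem is established earlier for every lattice polytope, it applies to the cube directly, so no further obstacle arises.
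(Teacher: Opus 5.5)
Your proposal is correct and follows the same route as the paper: apply Theorem~\ref{thm:dilates} to the cubes $[0,k]^d$, use $h^*([0,k]^d;z)=A_{k,d}(z)$ with $A_{k,d}(1)=d!\,k^d$, and read off the $t$-th coordinate of $\eul_d$ as $A_{1,d,t-1}/d!$ so that the factor $d!$ cancels. Your explicit checks of the index shift from the $1+\des$ convention and of the $k=1$ reduction to ordinary descents spell out what the paper's one-line proof leaves implicit.
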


\begin{proof}
    Applying Theorem~\ref{thm:dilates} to the sequence of polytopes $([0,k]^d:k\geq 0)$ yields that the limit of their $h^*$-distributions is given by the Eulerian distribution $\eul_d$.
    However, the Eulerian distribution is simply the distribution for $A_{1,d}(z)$ shifted by one degree, and the result follows after canceling factors of $d!$.
\end{proof}

\section{Real-rooted distributions}\label{sec:realrooted}

\subsection{$h^*$-inequalities for real-rooted distributions}\label{ssec:tailboundapplications}

In this section we focus on real-rooted $h^*$-distributions.
We begin with the following theorem, which is an immediate consequence of the large deviation bounds~\eqref{eq:largedeviation} combined with $\nvol(P)=\sum_{j=0}^dh^*_j$.

\begin{theorem}
    \label{thm:realrootinequality}
    Let $P$ be an $h^*$-real-rooted lattice polytope with $\deg(h^*(P;z))=d$ and $X_P\sim \hdis(P)$.
    For $\E[X_P]+1\leq n\leq d$, we have
    \[
    \sum_{j=n}^d h^*_j\leq \left( \sum_{j=0}^d h^*_j \right) \left(\frac{\E[X_P]}{n}\right)^n\left(\frac{d-\E[X_P]}{d-n}\right)^{d-n} \, .
    \]
\end{theorem}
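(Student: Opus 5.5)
The plan is to translate the large deviation bound~\eqref{eq:largedeviation} into a statement about $h^*$-coefficients. The main work is checking that the hypotheses of that bound are met by $\hdis(P)$ when $h^*(P;z)$ is real-rooted of degree $d$.

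First, I would verify that $\hdis(P)$ is a real-rooted distribution on $[0,d]$ in the sense of Pitman's setup. The coefficients $h^*_j$ are non-negative integers with $h^*_0=1>0$, and $h^*(P;z)$ is real-rooted of degree $d$. Therefore all $d$ roots are real and non-positive; they cannot be positive, since a polynomial with non-negative coefficients and positive constant term has no positive roots. Write them as $-r_i$ with $r_i\geq 0$, and set $p_i=1/(1+r_i)\in(0,1]$. Then
\[
h^*(P;z)/\nvol(P)=\prod_{i=1}^d\bigl((1-p_i)+p_iz\bigr),
\]
because both sides have the same roots and both evaluate to $1$ at $z=1$. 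So $X_P$ is distributed as a sum of $d$ independent Bernoulli trials. A small point to handle is the hypothesis that $\deg h^*=d$: it guarantees that exactly $d$ trials appear, so the distribution is genuinely supported on $[0,d]$ with parameter $d$ as required by~\eqref{eq:hoeffding} and~\eqref{eq:largedeviation}. If some $r_i=0$, then $p_i=1$, which is still a legitimate Bernoulli trial.

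Next, I would apply~\eqref{eq:largedeviation} with $X=X_P$ and this $d$. For integers $n$ with $\E[X_P]+1\leq n\leq d$ it gives
\[
\Prob[n,d]\leq\left(\frac{\E[X_P]}{n}\right)^n\left(\frac{d-\E[X_P]}{d-n}\right)^{d-n},
\]
with the convention $0^0=1$ when $n=d$. Since $\Prob[n,d]=\sum_{j=n}^d h^*_j/\nvol(P)$ and $\nvol(P)=\sum_{j=0}^d h^*_j$, multiplying through by $\nvol(P)$ yields the stated inequality.

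The only step needing care is the identification of real-rooted $h^*$-polynomials with Bernoulli-sum distributions having exactly $d$ trials, including the sign of the roots and the edge case $n=d$. Everything else is direct substitution into the previously cited bound.
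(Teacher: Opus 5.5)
Your proposal is correct and is the same argument as the paper's, which simply applies the large deviation bound~\eqref{eq:largedeviation} to $\hdis(P)$ and multiplies through by $\nvol(P)=\sum_{j=0}^d h^*_j$. Your explicit check that a real-rooted $h^*$-polynomial of degree $d$ gives a sum of exactly $d$ Bernoulli trials fills in what the paper leaves implicit. In fact each $p_i\in(0,1)$ strictly, since $h^*_0=1$ rules out a root at $0$.
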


For classes of polytopes where $\E[X_P]$ can be determined, these become linear inequalities.
For example, for reflexive polytopes, we obtain the following; a similar result can be obtained for Gorenstein polytopes.

\begin{proposition}
    \label{prop:reflexiverealrootinequality}
    Let $P$ be an $h^*$-real-rooted reflexive polytope with $\deg(h^*(P;z))=d$ and $X_P\sim \hdis(P)$.
    For $\frac{d}{2}+1\leq n\leq d$, we have
    \[
    \sum_{j=n}^d h^*_j\leq \frac{1}{2^d}\left( \sum_{j=0}^d h^*_j \right) \left(\frac{d}{n}\right)^n\left(\frac{d}{d-n}\right)^{d-n} \, .
    \]
    An identical inequality holds for $0\leq m\leq \frac{d}{2}$ with $n$ replaced by $m$.
\end{proposition}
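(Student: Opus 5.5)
The plan is to specialize Theorem~\ref{thm:realrootinequality} to the reflexive case by inserting the mean computed in Example~\ref{ex:reflexive}. Since $P$ is reflexive of dimension $d$, its $h^*$-polynomial is palindromic of full degree $d$. Example~\ref{ex:reflexive}, which rests on $\nvol(P)=B(P)$ and the mean formula~\eqref{eq:mean} of Theorem~\ref{thm:meanvariance}, gives $\E[X_P]=d/2$. With this value, the hypothesis $\E[X_P]+1\leq n\leq d$ of Theorem~\ref{thm:realrootinequality} becomes $\frac{d}{2}+1\leq n\leq d$, which is exactly the stated range.

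Substituting $\E[X_P]=d/2$ into the bound of Theorem~\ref{thm:realrootinequality} gives
\[
\left(\frac{d/2}{n}\right)^n\left(\frac{d-d/2}{d-n}\right)^{d-n}=\left(\frac{1}{2}\right)^{n}\left(\frac{1}{2}\right)^{d-n}\left(\frac{d}{n}\right)^n\left(\frac{d}{d-n}\right)^{d-n}=\frac{1}{2^d}\left(\frac{d}{n}\right)^n\left(\frac{d}{d-n}\right)^{d-n}.
\]
Multiplying by $\sum_j h^*_j=\nvol(P)$ yields the first claimed inequality. In the boundary case $n=d$, the factor $\left(\frac{d}{d-n}\right)^{d-n}$ should be read as $1$, with the usual convention $0^0=1$ that is already implicit in~\eqref{eq:largedeviation}.

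For the lower tail, there are two routes.

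\textbf{Route 1 (symmetry).} Palindromicity gives $\sum_{j=0}^m h^*_j=\sum_{j=d-m}^d h^*_j$. The expression $\frac{1}{2^d}\left(\frac{d}{n}\right)^n\left(\frac{d}{d-n}\right)^{d-n}$ is invariant under $n\mapsto d-n$. So for $m\leq \frac{d}{2}-1$ the lower-tail bound follows from the upper-tail bound applied with $n=d-m$.

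\textbf{Route 2 (direct).} The remark following~\eqref{eq:largedeviation} says the same function bounds $\Prob[0,m]$ for $0\leq m\leq\E[X]$. Applying it directly with $\E[X_P]=d/2$ and multiplying by $\nvol(P)$ covers the full range $0\leq m\leq d/2$ stated in the proposition, including the values with $\frac{d}{2}-1<m\leq\frac{d}{2}$ that Route 1 misses.

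The main obstacle is only bookkeeping at the edges of the ranges. One must check that the lower-tail statement in~\eqref{eq:largedeviation} is valid up to $m=\E[X]$, not just up to $\E[X]-1$. I would simply rely on the cited form of the bound there. As a sanity check at $m=d/2$, the bound equals $\sum_j h^*_j$, so the inequality is trivially true.
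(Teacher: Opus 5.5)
Your proposal is correct and follows the paper's proof, which is the one-line instruction to evaluate Theorem~\ref{thm:realrootinequality} at $\E[X_P]=d/2$ and simplify. For the lower-tail claim, the paper implicitly relies on the remark following~\eqref{eq:largedeviation}, which is your Route~2; your attention to the convention $0^0=1$ and to the edge values of $m$ adds detail without departing from the paper's argument.
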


\begin{proof}
    Evaluate Theorem~\ref{thm:realrootinequality} using $\E[X_P]=d/2$ and simplify.
\end{proof}

One corollary of this is a lower bound on the normalized volume of a real-rooted reflexive polytope.
Further, as a consequence of Hoeffding's inqualities~\eqref{eq:hoeffding}, we have that the crosspolytopes have the most ``spread out'' $h^*$-distribution among $h^*$-real-rooted reflexive polytopes, where the family of \emph{crosspolytopes} $\Diamond_d:=\conv\{\pm e_i: 1\leq i\leq d\}$ have $h^*(\Diamond_d;z)=(1+z)^d$, and thus $\hdis(\Diamond_d)=P_{d,1/2}$.
We record these observations formally in the following corollary.

\begin{corollary}
    \label{cor:reflexiverealrootvolume}
    For $P$ an $h^*$-real-rooted reflexive polytope with $\deg(h^*(P;z))=d$, we have
    \[
    2^d\leq \sum_{j=0}^d h^*_j = \nvol(P)\, .
    \]
    The above inequality is sharp.
    For any $0\leq m\leq d/2$, we also have
    \[
    2^d\sum_{j=0}^mh^*_j \leq \left(\sum_{j=0}^d h^*_j\right) \left(\sum_{k=0}^m\binom{d}{k}\right) \, .
    \]
\end{corollary}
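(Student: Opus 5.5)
Both inequalities come from Hoeffding's inequalities~\eqref{eq:hoeffding} for real-rooted distributions, using the mean computed in Example~\ref{ex:reflexive}. Since $P$ is reflexive, $\E[X_P]=d/2$, so the comparison binomial distribution is $P_{d,1/2}$, whose mass function is $\binom{d}{k}/2^d$.

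For the volume bound, I use $h^*_0=1$, which holds for every lattice polytope. I apply the lower-tail Hoeffding inequality with $m=0$. This is allowed when $0\leq \E[X_P]-1=d/2-1$, i.e., $d\geq 2$. It gives
\[
\frac{1}{\nvol(P)}=\Prob(0)\leq P_{d,1/2}[0,0]=\frac{1}{2^d},
\]
which is exactly $2^d\leq\nvol(P)$.

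The small cases need separate treatment. For $d=1$, the degree-one real-rooted reflexive $h^*$-polynomial is $1+z$, so the volume is $2=2^1$. For $d=0$, the claim $1\leq 1$ is trivial. Alternatively, for $d\geq 1$ one can use the large deviation bound~\eqref{eq:largedeviation} with $m=0$, where the right-hand side at $m=0$ is $((d-d/2)/d)^d=2^{-d}$; this avoids the Hoeffding range restriction.

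Sharpness is witnessed by the crosspolytope $\Diamond_d$. It is reflexive, and $h^*(\Diamond_d;z)=(1+z)^d$ is real-rooted with $\nvol(\Diamond_d)=2^d$.

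For the tail inequality, the Hoeffding statement gives $\Prob[0,m]\leq P_{d,1/2}[0,m]$ for $0\leq m\leq d/2-1$. Multiplying through by $2^d\nvol(P)$ and using $\Prob[0,m]=\sum_{j\le m}h^*_j/\nvol(P)$ yields exactly the stated inequality.

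The main obstacle is the stated range $0\le m\le d/2$, which extends past the Hoeffding range. I handle the remaining values of $m$ (those with $d/2-1<m\leq d/2$) by symmetry. Since $h^*$ is palindromic, $\Prob[0,m]=1-\Prob[d-m+1,d]=1-\Prob[0,m-1]$ when reindexed via $j\mapsto d-j$. The binomial $P_{d,1/2}$ has the same symmetry.

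If $d$ is even and $m=d/2$, then
\[
\Prob[0,d/2]=\tfrac12+\tfrac12\Prob(d/2) \quad\text{and}\quad P_{d,1/2}[0,d/2]=\tfrac12+\tfrac12 P_{d,1/2}(d/2).
\]
So it suffices to show $\Prob(d/2)\leq\binom{d}{d/2}2^{-d}$. This follows from the fact (also due to Hoeffding/Pitman) that the binomial maximizes concentration at the mean among Bernoulli sums. If that is not available, I would instead use $\Prob(d/2)=1-2\Prob[0,d/2-1]$ and compare with the $m=d/2-1$ case; note, however, that this direction gives a \emph{lower} bound on $\Prob(d/2)$, so I expect to need the mode inequality for sums of independent Bernoullis. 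The odd-$d$ boundary case, $m=(d-1)/2$, is handled the same way: palindromicity gives $\Prob[0,(d-1)/2]=1/2=P_{d,1/2}[0,(d-1)/2]$, so equality holds.
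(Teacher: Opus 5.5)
Your volume bound, the sharpness via $\Diamond_d$, the range $0\le m\le d/2-1$, and the odd-$d$ boundary case $m=(d-1)/2$ (where both normalized sides equal $\tfrac12$) are all correct. This is essentially the paper's argument: you use the lower tail at $m=0$ with $h^*_0=1$, where the paper uses the upper tail at $n=d$ with $h^*_d=1$. Your separate treatment of $d=1$ is a small improvement, since $n=d$ lies in the range $\frac d2+1\le n\le d$ only for $d\ge 2$.

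The gap is the even-$d$ boundary case $m=d/2$. You correctly reduce it to $\Prob(d/2)\le\binom{d}{d/2}2^{-d}$, but the ``mode inequality'' you invoke is backwards. The binomial is the \emph{most} spread out distribution, so it has the \emph{least} central mass. Indeed, Hoeffding at $m=d/2-1$ together with palindromicity gives
\[
\Prob(d/2)=1-2\Prob[0,d/2-1]\geq 1-2P_{d,1/2}[0,d/2-1]=\binom{d}{d/2}2^{-d}.
\]
This is the lower bound you noticed, and it is the true direction. So this case cannot be proved: it holds only when equality holds above. Concretely, $P=[-1,1]^2$ is reflexive with real-rooted $h^*(P;z)=1+6z+z^2$. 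At $m=1=d/2$ the claimed inequality reads $4\cdot 7\leq 8\cdot 3$, which is false. The reflexive hexagon with $h^*=(1,4,1)$ and the triangle with $h^*=(1,7,1)$ fail in the same way.

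Consequently the second inequality of the corollary is valid only for $0\le m\le d/2-1$, together with the equality case $m=(d-1)/2$ for odd $d$. That is also all the paper's one-line ``direct application of Hoeffding's inequalities'' delivers, so the stated range $m\le d/2$ must be corrected rather than proved.

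A minor slip: your reindexing identity should read $\Prob[0,m]=1-\Prob[m+1,d]=1-\Prob[0,d-m-1]$. It agrees with what you wrote only when $m=d/2$, which is the only case where you use it.
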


\begin{proof}
    For the first inequality, evaluate Proposition~\ref{prop:reflexiverealrootinequality} at $n=d$ and use the fact that $h^*_d=1$ for reflexive polytopes.
    The crosspolytopes $\Diamond_d$ demonstrate that the first inequality is sharp.
    The second inequality is a direct application of Hoeffding's inequalities~\eqref{eq:hoeffding}.
\end{proof}

\subsection{Asymptotic normality}\label{ssec:asymptoticnormality}

We next provide an asymptotic normality criterion for sequences of real-rooted $h^*$-distributions in terms of their Ehrhart polynomial coefficients.
Recall from~\eqref{eq:rrasymp} that a sequence of real-rooted distributions is asymptotically normal if and only if $\sqrt{\Var[X_d]}\to \infty$ as $d\to \infty$.

\begin{theorem}
    \label{thm:realrootedasymptoticnormality}
    Consider a sequence of $h^*$-real-rooted lattice polytopes $P_1,P_2,\ldots$ with dimensions $d_1<d_2<\cdots$ having $X_{P_j}\sim \hdis(P_j)$ and $L_{P_j}(t)=\sum_{i=0}^{d_j}c_i(j)t^i$.
    Suppose further that $B(P_j)\leq \nvol(P_j)$ for all $j$.
    The sequence $X_{P_j}$ is asymptotically normal if
    \begin{equation}\label{eq:asympsuff}
    \lim_{j\to \infty}\left( \frac{2}{d_j(d_j-1)}\frac{c_{{d_j}-2}(j)}{c_{d_j}(j)} + \frac{1}{12}(3d_j^2+d_j-5)\right)^{1/2} = \infty \, .
    \end{equation}
\end{theorem}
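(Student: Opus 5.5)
The plan is to reduce everything to the real-rooted criterion~\eqref{eq:rrasymp}: the sequence $X_{P_j}$ is asymptotically normal as soon as $\Var[X_{P_j}]\to\infty$. So the whole proof is a lower bound for $\Var[X_{P_j}]$ in terms of the quantity under the square root in~\eqref{eq:asympsuff}. Write $d=d_j$ and $V=\nvol(P_j)=d!\,c_d(j)$, and introduce
\[
a_j=\frac{B(P_j)}{2V},\qquad \beta_j=\frac{2(d-2)!\,c_{d-2}(j)}{V}=\frac{2}{d(d-1)}\frac{c_{d-2}(j)}{c_d(j)} .
\]
Note that $\beta_j$ is exactly the first summand in~\eqref{eq:asympsuff}.

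First I will simplify~\eqref{eq:variance} of Theorem~\ref{thm:meanvariance}. By~\eqref{eq:mean}, $\E[X_{P_j}]=\frac{d+1}{2}-a_j$. The combination $(d+1)\E-\E^2=\E\,(d+1-\E)$ then factors as a difference of squares:
\[
(d+1)\E-\E^2=\Bigl(\tfrac{d+1}{2}-a_j\Bigr)\Bigl(\tfrac{d+1}{2}+a_j\Bigr)=\tfrac{(d+1)^2}{4}-a_j^2 .
\]
Substituting this into~\eqref{eq:variance} gives the clean identity
\[
\Var[X_{P_j}]=\beta_j+\frac{d+1}{12}-a_j^2 .
\]
This is the analogue of Corollary~\ref{cor:momentsdilates} at $t=1$ and of Corollary~\ref{cor:varreflexives} when $a_j=\tfrac12$. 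The hypothesis $B(P_j)\le\nvol(P_j)$ says precisely that $0<a_j\le\tfrac12$, so
\[
\Var[X_{P_j}]\ \ge\ \beta_j+\frac{d-2}{12}.
\]

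Second, I must connect this to the quantity $S_j:=\beta_j+\tfrac1{12}(3d^2+d-5)$ appearing in~\eqref{eq:asympsuff}. Rewriting the identity above in terms of $S_j$ gives
\[
\Var[X_{P_j}]=S_j-\frac{3d^2-6}{12}-a_j^2 .
\]
So the hypothesis $S_j\to\infty$ yields $\Var[X_{P_j}]\to\infty$ only once the excess of $S_j$ over $\tfrac{d^2}{4}$ (roughly) is controlled. This is the step I expect to be the main obstacle. Indeed, because $\Var\ge0$ forces $\beta_j\ge a_j^2-\tfrac{d+1}{12}\ge-\tfrac{d+1}{12}$, we get $S_j\ge\tfrac{3d^2-6}{12}$ automatically, so $S_j\to\infty$ whenever $d_j\to\infty$ and by itself carries no information.

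My first attempt will therefore be to extract extra control from real-rootedness. Write $h^*(P_j;z)$ as a product of factors $(1-p_i)+p_iz$, with $\E=\sum p_i\ge d/2$ and $h_0^*/V=\prod(1-p_i)=1/V$. Then $\Var=\sum p_i(1-p_i)$, and I would try to bound this sum below, combining the palindromic-type constraint from $a_j\le\tfrac12$ with Hoeffding's comparison~\eqref{eq:hoeffding}.

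If that fails to give an unconditional bound, I will instead read~\eqref{eq:asympsuff} as requiring that $\beta_j+\tfrac{d-2}{12}\to\infty$, that is, that $S_j-\tfrac{3d^2-6}{12}\to\infty$. Under that reading the inequality $\Var[X_{P_j}]\ge\beta_j+\tfrac{d-2}{12}$ finishes the proof immediately via~\eqref{eq:rrasymp}. I would flag explicitly in the write-up that this is the form in which the variance identity makes the criterion effective.
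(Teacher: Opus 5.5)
Your identity $\Var[X_{P_j}]=\beta_j+\frac{d_j+1}{12}-a_j^2$ and the bound $\Var[X_{P_j}]\ge\beta_j+\frac{d_j-2}{12}$ are correct. Structurally this is the paper's own route: insert $B(P_j)\le\nvol(P_j)$ into \eqref{eq:variance}, then invoke \eqref{eq:rrasymp}.

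The paper, however, writes $\Var[X_{P_j}]\ge\beta_j+\frac{(d_j+1)d_j}{2}-\frac{(d_j+1)(3d_j+2)}{12}-\frac14$. It uses $\E[X_{P_j}]\ge d_j/2$ to bound $(d_j+1)\E[X_{P_j}]$ from below, but then replaces $-\E[X_{P_j}]^2$ by $-\frac14$. That is unjustified, since $\E[X_{P_j}]^2\ge d_j^2/4$. Already for the crosspolytope $\Diamond_2$ one has $\Var[X_{\Diamond_2}]=\frac12$, while that bound gives $\frac54$. This slip is exactly where $\frac1{12}(3d_j^2+d_j-5)$ comes from. The correct minimum of $(d+1)\E-\E^2$ over $\frac d2\le\E<\frac{d+1}2$ is $\frac{d(d+2)}4$, which gives your $\frac{d-2}{12}$.

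So your diagnosis that \eqref{eq:asympsuff} holds automatically once $d_j\to\infty$ is right. The real gap is in the statement itself: it is false as written. Your first plan, squeezing more out of real-rootedness, therefore cannot succeed.

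Here is a concrete counterexample. For full-dimensional $P\subset\R^n$ and $Q\subset\R^m$, let $P*Q=\conv\bigl((P\times\{0\}\times\{0\})\cup(\{0\}\times Q\times\{1\})\bigr)\subset\R^{n+m+1}$. Its Ehrhart series is the product of those of $P$ and $Q$. Let $P_s$ be the iterated join of $s$ copies of $[0,2s]$. Then:
\begin{itemize}
\item $d=2s-1$, and $h^*(P_s;z)=(1+(2s-1)z)^s$ is real-rooted.
\item $\E[X_{P_s}]=s-\frac12=\frac d2$, so \eqref{eq:mean} forces $B(P_s)=\nvol(P_s)$.
\item $\Var[X_{P_s}]=\frac{2s-1}{4s}<\frac12$, so the sequence is not asymptotically normal. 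Indeed $s-X_{P_s}$ converges to a Poisson law of mean $\frac12$.
\item Nevertheless, the quantity under the root in \eqref{eq:asympsuff} is at least $\frac{3d^2-6}{12}$, so the hypothesis holds.
\end{itemize}
In Bernoulli terms, roughly $d/2$ trials with success probability near $1$ already reach mean $d/2$ with bounded variance.

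You should drop the first plan, record such an example, and present your fallback as a corrected theorem. The conclusion holds when $\frac{2}{d_j(d_j-1)}\frac{c_{d_j-2}(j)}{c_{d_j}(j)}+\frac{d_j-2}{12}\to\infty$, and your inequality proves this in one line via \eqref{eq:rrasymp}. That corrected version is all Corollary~\ref{cor:ehrposasympnormal} needs, since $c_{d_j-2}\ge0$ makes this quantity at least $\frac{d_j-2}{12}$.
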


\begin{proof}
    Since $B(P_j)\leq \nvol(P_j)$, we have that $B(P_j)/2\nvol(P_j)\leq 1/2$.
    Thus, applying this inequality to the $\E[X_{P_j}]$ terms appearing in the variance formula from Theorem~\ref{thm:meanvariance}, we have that
    \[
    \Var[X_{P_j}]\geq \frac{2(d_j-2)!c_{d_j-2}(j)}{\nvol(P_j)}+\frac{(d_j+1)d_j}{2}-\frac{(d_j+1)(3d_j+2)}{12}-\frac{1}{4} \, . 
    \]
    Using $\nvol(P_j)=d_j!c_{d_j}(j)$ and simplifying algebraically, combined with~\eqref{eq:rrasymp}, yields the result.
\end{proof}

Observe that the inequality $B(P_j)\leq \nvol(P_j)$ is not automatic, e.g., it does not hold for unit cubes. 
Theorem~\ref{thm:realrootedasymptoticnormality} implies that for any sequence of $h^*$-real-rooted polytopes with $c_{d-2}/c_{d}$ having appropriate growth rate in $d$, we obtain asymptotic normality.
A sufficient condition for this is the following.

\begin{corollary}
    \label{cor:ehrposasympnormal}
    Suppose that $P_j$ is a sequence of $h^*$-real-rooted lattice polytopes of strictly increasing dimension such that each $P_j$ contains an interior lattice point and $c_{{d_j}-2}\geq 0$.
    Then the sequence $\hdis(P_j)$ is asymptotically normal.
\end{corollary}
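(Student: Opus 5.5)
We deduce the corollary from Theorem~\ref{thm:realrootedasymptoticnormality}. Two hypotheses must be checked: the boundary-volume condition $B(P_j)\leq \nvol(P_j)$ for every $j$, and the divergence condition~\eqref{eq:asympsuff}.

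\textbf{Divergence condition.} This part is immediate. Since $c_{d_j-2}(j)\geq 0$ by hypothesis and $c_{d_j}(j)=\nvol(P_j)/d_j!>0$, the first summand in~\eqref{eq:asympsuff} is non-negative. The second summand, $\frac{1}{12}(3d_j^2+d_j-5)$, tends to infinity because the $d_j$ are strictly increasing integers. Hence the square root of the sum tends to infinity as well.

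\textbf{Boundary-volume condition.} This is the main step. We use the interior lattice point $x\in P_j$.
\begin{enumerate}
\item Decompose $P_j$ into the pyramids $\conv(x,F)$, one for each facet $F$ of $P_j$. These pyramids have disjoint interiors and their union is $P_j$.
\item For a facet $F$, the pyramid $\conv(x,F)$ has apex $x$ at some positive integer lattice distance $h_F$ from the affine hull of $F$. Both $x$ and $F$ are lattice objects, so $h_F\geq 1$.
\item The normalized volume of a lattice pyramid is the height times the normalized volume of the base, so $\nvol(\conv(x,F))=h_F\,\nvol(F)\geq \nvol(F)$. This holds with lattice-normalized volumes, in the lattice of the affine span. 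Justifying it amounts to the standard formula $d!\cdot\frac{1}{d}\cdot h\cdot \mathrm{vol}_{d-1}$ with lattice normalizations.
\item Summing over facets gives $\nvol(P_j)=\sum_F h_F\,\nvol(F)\geq B(P_j)$.
\end{enumerate}
The delicate points are the normalization conventions, namely that the facet volume is taken relative to the lattice in its affine hull, and ensuring that $x$ is interior relative to the affine span of $P_j$ so that $h_F\geq 1$ for every facet. One can also phrase this via Theorem~\ref{thm:meanvariance}: the inequality $B(P)\leq\nvol(P)$ is equivalent to $\E[X_P]\geq d/2$.

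With both hypotheses verified, Theorem~\ref{thm:realrootedasymptoticnormality} gives that $\hdis(P_j)$ is asymptotically normal.
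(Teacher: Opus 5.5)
Your proposal is correct and is essentially the paper's own argument: the pyramids over the facets with apex at the interior lattice point give $B(P_j)\le\nvol(P_j)$, and $c_{d_j-2}\ge 0$ then makes \eqref{eq:asympsuff} diverge, so Theorem~\ref{thm:realrootedasymptoticnormality} applies. One caveat: the summand $\frac{1}{12}(3d_j^2+d_j-5)$ in that theorem is an algebra slip, since Theorem~\ref{thm:meanvariance} actually gives $\Var[X_P]=\frac{2}{d(d-1)}\frac{c_{d-2}}{c_d}+\frac{d+1}{12}-\bigl(\frac{B(P)}{2\nvol(P)}\bigr)^2$ (for $\Diamond_2$ the stated bound would be $5/4$ while the true variance is $1/2$), so under $B(P)\le\nvol(P)$ the correct summand is $\frac{d_j-2}{12}$, matching Corollary~\ref{cor:varreflexives}---but this still tends to infinity, so your divergence step and the corollary go through unchanged.
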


\begin{proof}
    If $P$ contains an interior lattice point, call it $\mathbf{v}$, then taking pyramids over the facets of $P$ with apex $\mathbf{v}$ yields a triangulation of $P$.
    The normalized volume of each such pyramid is at least as large as the normalized volume of the corresponding facet, and thus $B(P)\leq \nvol(P)$.
    Given this and the real-rooted assumption, Theorem~\ref{thm:realrootedasymptoticnormality} applies.
    The non-negativity of $c_{d_{j}-2}$ yields that~\eqref{eq:asympsuff} grows at least linearly in $d_j$, hence diverges.
\end{proof}

\begin{example}
    \label{ex:zonotopesasympnormal}
    Consider a sequence $P_j$ of lattice zonotopes of strictly increasing dimension such that each $P_j$ contains a lattice point in its interior.
    Beck, Jochemko, and McCullough~\cite{beckjochemkomccullough} proved that zonotopes are $h^*$-real-rooted, and they satisfy $c_{d-2}\geq 0$ as a result of their general Ehrhart positivity, obtained as a corollary of Theorem~\ref{thm:stanleyzonotopes}.
    Any such sequence $P_j$ has an asymptotically normal sequence of $h^*$-distributions.
\end{example}

\begin{example}
    \label{ex:magicpositive}
    Consider again a sequence of lattice polytopes $P_j$ of strictly increasing dimension that are \emph{magic positive}, i.e., such that each Ehrhart polynomial is of the form $\sum_{i=0}^da_it^i(1+t)^{d-i}$ with $a_i\geq 0$.
    It is known~\cite{FHsurvey} that magic positivity implies both Ehrhart positivity $c_i\geq 0$ for all $i$ and that $h^*(P;z)$ is real-rooted.
    For any sequence where $P_j$ contains an interior lattice point for all $j$, the sequence of $h^*$-distributions is asymptotically normal.
    The zonotopes given in the previous example are known to be magic positive, as are Pitman-Stanley polytopes~\cite{luckandmagic}, Stasheff polytopes~\cite{Konoike}, some partial permutahedra~\cite{LiuZhang}, a special class of arbor polytopes~\cite{arbormagicpositive}, and certain generalized parking-function polytopes~\cite{HillLuoTrinhVindas}.
   
\end{example}

\section{Further directions}
\label{sec:furtherdirections}

Our study of $h^*$-distributions leads naturally to the following further directions for research.

\begin{question}\label{q:reflexiveasympnorm}
    Which reflexive polytopes are $h^*$-real-rooted with $c_{d-2}\geq 0$? 
    Any sequence of reflexive polytopes with strictly increasing dimensions that satisfy these conditions will have asymptotically normal $h^*$-distributions.
\end{question}

\begin{question}\label{q:boundaryvolume}
    Consider the (finite up to unimodular equivalence) set of $d$-dimensional lattice polytopes with fixed values of $B(P)$ and $\nvol(P)$.
    What is the behavior of $c_{d-2}$ among these polytopes?
    Equivalently, since all of these polytopes would have the same $\E(X_P)$ value, what is the behavior of the variances for the $h^*$-distributions for these polytopes?
    In a different direction, which lattice polytopes satisfy $B(P)\leq\nvol(P)$, and which of these are $h^*$-real-rooted?
\end{question}

\begin{question}
    \label{q:clusterpoints}
    For fixed dimension $d\geq 2$, which probability distributions arise as cluster points for $h^*$-distributions?
    What are sequences of polytopes with $h^*$-distributions that converge to a cluster point?  
\end{question}

\section{Tool and computational resource disclosure}

The authors did not use AI or LLM tools for any aspect of this research or the writing of this manuscript.
SageMath~10.9 was used for computations, available at \url{https://www.sagemath.org}.

\bibliographystyle{amsplain}
\bibliography{bibliography}

\providecommand{\bysame}{\leavevmode\hbox to3em{\hrulefill}\thinspace}
\providecommand{\MR}{\relax\ifhmode\unskip\space\fi MR }
\providecommand{\MRhref}[2]{%
  \href{http://www.ams.org/mathscinet-getitem?mr=#1}{#2}
}
\providecommand{\href}[2]{#2}
\begin{thebibliography}{10}

\bibitem{arbormagicpositive}
Christos~A. Athanasiadis, Qiqi Xiao, and Xue Yan, \emph{Lattice point enumeration of some arbor polytopes}, 2026, Preprint, \url{https://arxiv.org/abs/2603.11654}.

\bibitem{luckandmagic}
Nicolas Avila, Luis Ferroni, and Alejandro~H. Morales, \emph{Luck and magic for {P}itman-{S}tanley polytopes and parking functions}, 2026, https://arxiv.org/abs/2603.19194.

\bibitem{BajoBraunCodenottiHofscheierVindas}
Esme Bajo, Benjamin Braun, Giulia Codenotti, Johannes Hofscheier, and Andr\'es~R. Vindas-Mel\'endez, \emph{\href{https://link.springer.com/article/10.1007/s13366-025-00784-z}{Local \(h^*\)-polynomials for one-row {H}ermite normal form simplices}}, Beitr. Algebra Geom. (2025), published online.

\bibitem{Balletti-Higashitani}
Gabriele Balletti and Akihiro Higashitani, \emph{Universal inequalities in {Ehrhart} theory}, Isr. J. Math. \textbf{227} (2018), 843--859.

\bibitem{BeckBraunEulerian}
Matthias Beck and Benjamin Braun, \emph{Euler-{M}ahonian statistics via polyhedral geometry}, Adv. Math. \textbf{244} (2013), 925--954.

\bibitem{beckjochemkomccullough}
Matthias Beck, Katharina Jochemko, and Emily McCullough, \emph{{$h^\ast$}-polynomials of zonotopes}, Trans. Amer. Math. Soc. \textbf{371} (2019), no.~3, 2021--2042.

\bibitem{BeckRobins}
Matthias Beck and Sinai Robins, \emph{Computing the continuous discretely}, second ed., Undergraduate Texts in Mathematics, Springer, New York, 2015, Integer-point enumeration in polyhedra, With illustrations by David Austin.

\bibitem{BeckStapledon}
Matthias Beck and Alan Stapledon, \emph{On the log-concavity of {H}ilbert series of {V}eronese subrings and {E}hrhart series}, Math. Z. \textbf{264} (2010), no.~1, 195--207.

\bibitem{ampleframingflow}
Matias~von Bell, Benjamin Braun, Kaitlin Bruegge, Derek Hanely, Zachery Peterson, Khrystyna Serhiyenko, and Martha Yip, \emph{Triangulations of flow polytopes, ample framings, and gentle algebras}, Selecta Math. (N.S.) \textbf{30} (2024), no.~3, Paper No. 55, 34.

\bibitem{brandensurvey}
Petter Br\"and\'en, \emph{Unimodality, log-concavity, real-rootedness and beyond}, Handbook of enumerative combinatorics, Discrete Math. Appl. (Boca Raton), CRC Press, Boca Raton, FL, 2015, pp.~437--483.

\bibitem{braunsurvey}
Benjamin Braun, \emph{Unimodality problems in {E}hrhart theory}, Recent trends in combinatorics, IMA Vol. Math. Appl., vol. 159, Springer, [Cham], 2016, pp.~687--711.

\bibitem{endquasipoly}
Benjamin Braun, Christopher O'Neill, and Antwon Park, \emph{Eventually nondecreasing quasi-polynomials}, preprint, 2026.

\bibitem{BrunsGubeladze}
Winfried Bruns and Joseph Gubeladze, \emph{Polytopes, rings, and {$K$}-theory}, Springer Monographs in Mathematics, Springer, Dordrecht, 2009.

\bibitem{Ehrhart}
Eug\`ene Ehrhart, \emph{Sur les poly\`edres rationnels homoth\'etiques \`a{} {$n$}\ dimensions}, C. R. Acad. Sci. Paris \textbf{254} (1962), 616--618.

\bibitem{FHsurvey}
Luis Ferroni and Akihiro Higashitani, \emph{Examples and counterexamples in {E}hrhart theory}, EMS Surveys in Mathematical Sciences (2024), Published online first, https://doi.org/10.4171/emss/86.

\bibitem{Hibibook}
Takayuki Hibi, \emph{Algebraic combinatorics on convex polytopes}, Carslaw Publications, Glebe, 1992.

\bibitem{higashitani}
Akihiro Higashitani, \emph{Counterexamples of the conjecture on roots of {E}hrhart polynomials}, Discrete Comput. Geom. \textbf{47} (2012), no.~3, 618--623.

\bibitem{HillLuoTrinhVindas}
Charlie Hill, Ambrose Luo, Vu~Trinh, and Andr\'{e}s~R. Vindas-Mel\'{e}ndez, \emph{Lattice slices, {E}hrhart polynomials, and magic positivity of generalized parking-function polytopes}, 2026, in preparation.

\bibitem{Konoike}
Masato Konoike, \emph{A new class of magic positive {E}hrhart polynomials of reflexive polytopes}, S\'em. Lothar. Combin. \textbf{93B} (2025), Art. 148, 11. \MR{4975683}

\bibitem{LagariasZiegler}
Jeffrey~C. Lagarias and G\"unter~M. Ziegler, \emph{Bounds for lattice polytopes containing a fixed number of interior points in a sublattice}, Canad. J. Math. \textbf{43} (1991), no.~5, 1022--1035.

\bibitem{LiuZhang}
Feihu Liu and Zihao Zhang, \emph{Magic positivity for the {E}hrhart polynomials of partial permutohedra}, 2026, Preprint, \url{https://arxiv.org/abs/2607.03854}.

\bibitem{pitman}
Jim Pitman, \emph{Probabilistic bounds on the coefficients of polynomials with only real zeros}, J. Combin. Theory Ser. A \textbf{77} (1997), no.~2, 279--303.

\bibitem{Schrivjer}
Alexander Schrijver, \emph{Theory of linear and integer programming}, Wiley-Interscience Series in Discrete Mathematics, John Wiley \& Sons, Ltd., Chichester, 1986, A Wiley-Interscience Publication.

\bibitem{Scott}
P.R. Scott, \emph{On convex lattice polygons}, Bull. Austral. Math. Soc. \textbf{15} (1976), no.~3, 395--399.

\bibitem{StanleyDecompositions}
Richard~P. Stanley, \emph{Decompositions of rational convex polytopes}, Ann. Discrete Math. \textbf{6} (1980), 333--342.

\bibitem{stanleyzonotopes}
\bysame, \emph{A zonotope associated with graphical degree sequences}, Applied geometry and discrete mathematics, DIMACS Ser. Discrete Math. Theoret. Comput. Sci., vol.~4, Amer. Math. Soc., Providence, RI, 1991, pp.~555--570.

\end{thebibliography}
\end{document}